\numberwithin{equation}{section}
\theoremstyle{plain}
\newtheorem{theorem}{Theorem}[section]
\newtheorem{lemma}[theorem]{Lemma}
\newtheorem{remark}{Remark}
\begin{document}

\begin{frontmatter}
\title{An Oracle Property of The Nadaraya-Watson Kernel Estimator for High Dimensional Nonparametric Regression}
\runtitle{K-Fold Cross-Validation and Kernel Regression}

\begin{aug}
\author{\fnms{Daniel} \snm{Conn}
\ead[label=e1]{djconn17@gmail.com}}
\and
\author{\fnms{Gang} \snm{Li}
\ead[label=e2]{vli@ucla.edu}}


\affiliation{University of California, Los Angeles
}

\address{Department of Biostatistics\\
University of California, Los Angeles\\
Los Angeles, California 90095\\
USA\\
\printead{e1}}

\address{Department of Biostatistics\\
University of California, Los Angeles\\
Los Angeles, California 90095\\
USA\\
\printead{e2}}

\end{aug}

\begin{abstract}
The celebrated Nadaraya-Watson kernel estimator is among the most studied method for nonparametric regression. A classical result is that its rate of convergence depends on the number of covariates and deteriorates quickly as the dimension grows, which underscores the ``curse of dimensionality'' and has limited its use in high dimensional settings. In this article,  we show that 
when the true regression function is single
or multi-index, the effects of the curse of dimensionality may be mitigated for the Nadaraya-Watson kernel estimator.  Specifically, we prove that with $K$-fold cross-validation,
the Nadaraya-Watson kernel estimator indexed by a positive semidefinite bandwidth matrix
has an oracle property that its rate of convergence depends
on the number of indices of the regression function rather than the number of covariates.  
Intuitively, this oracle property is a consequence of allowing the bandwidths to diverge to infinity as opposed to restricting them all to converge to zero at certain rates as done in previous theoretical studies.
Our result provides a theoretical perspective for the use of kernel estimation in high dimensional nonparametric regression and other applications such as metric learning when a low rank structure is anticipated.
Numerical illustrations are given through simulations and real data examples. 
\end{abstract}

\begin{keyword}[class=MSC]
\kwd[Primary ]{62G08}
\end{keyword}

\begin{keyword}
\kwd{low rank model}
\kwd{metric learning}
\kwd{multi-index model}
\kwd{oracle property}
\kwd{rate of convergence}
\kwd{smoothing}
\end{keyword}

\end{frontmatter}

%
%

\newcommand{\addlabel}[1]{%
\refstepcounter{equation}%
\label{#1}%
\let\]\endequation}
\newcommand{\resid}{(\hat{y}_{j} - y_{j})} 
\newcommand{\pdm[1]}{$\frac{\partial #1}{\partial H_{uv}}$}
\newcommand{\pd[1]}{\frac{\partial #1}{\partial H_{uv}}}
\newcommand{\pdh[1]}{\frac{\partial #1}{\partial h}}

\newcommand{\yhat}{\hat{y}_{j}}
\newcommand{\kij}{k_{ij}}
\newcommand{\kih}{k_{i,H}(x)}
\newcommand{\sumn}{\sum_{i=1}^{n}}
\newcommand{\phiH}{\psi_{H}(x|P_{n})}
\newcommand{\splitj}{S_{n}^{(j)}}
\newcommand{\maha}{(X_{i} - x)'H(X_{i} - x)}
\newcommand{\fold[1]}{\mathcal{F}_{#1}}
\newcommand\floor[1]{\lfloor#1\rfloor}
\newcommand\ceil[1]{\lceil#1\rceil}
\graphicspath{{./../simulations}}

%

\section{Introduction}
The Nadaraya-Watson kernel regression estimator \citep{nadaraya1964estimating, watson1964smooth} is a corner stone of 
nonparametric regression. Assume that one observes $n$ independent and identically (iid) distributed 
random variables of the form 
$O_{i}=(X_{i}, Y_{i}),  \sim P_{0}$, $i=1, \ldots,n$, where
$X_{i}\in \mathcal{R}^{p}$ are $p$-dimensional covariates and $Y_{i}$ is a real-valued outcome.  
The Nadaraya-Watson kernel estimator of the multivariate regression function $\psi(x)\equiv E[Y|X=x]$ is
commonly defined as
\begin{equation} 
\psi_{H^*}(x)=\frac{\sum_{i=1}^{n} K(H^{*-1/2}(X_i-x)) Y_{i}}{\sum_{i=1}^{n} K(H^{*-1/2}(X_i-x))}, \label{eq::kern_def0}
\end{equation}
where $H^*$ is a $p\times p$ symmetric positive definite matrix depending on $n$, 
and $K(u)$ is a kernel function such that $\int K(u) du =1$. 

It is well-known that the performance of nonparametric regression methods degrades as the
number of covariates, $p$, increases.  This degradation in performance is often referred to as the 
``curse of dimensionality."  For kernel regression, the curse of dimensionality can be seen to manifest
itself when $H^*=h^{*}_{n}I_{p}$ by considering results such as Theorem 5.2 of \cite{gyorfi2006distribution} 
concerning the rate of the convergence of the kernel regression estimator $\psi_{n,h^{*}_{n}}$ to $\psi$:
\begin{equation}
E\int (\psi_{n, h^{*}_n}(x) - \psi(x))^{2}dP_{0,X}(x) = O(n^{-2/(p + 2)}) \label{sclr_rate},
\end{equation} 
for an appropriately chosen sequence of scalar bandwidths, $h_{n}^{*}$.  
Also see the discussion at the end of Chapter 4 in \cite{hardle2012nonparametric}.  We highlight that 
the above result is established under the commonly made assumption that the bandwidth tends to zero at a certain rate as $n$ grows to infinity,
which allows the kernel estimator to pick up local features and balance the trade-off between its bias and variance.      

In this article,  however, we will show that if the bandwidths are allowed to 
diverge to infinity, then the Nadaraya-Watson kernel regression 
estimator using a bandwidth matrix has an oracle property when there exists an embedded low dimensional structure in $\psi(x)$. 
We will show in Theorem \ref{thm::kern_main} that if the true regression function is a single or multi-index regression model with index number $m$, 
then $K$-fold cross-validation can be used to produce an estimator with rate of convergence that depends on $m$ rather than $p$.
If $m$ is much less than $p$, the gain in predictive performance may be substantial.  

Specifically, we consider the following 
reparametrized form of 
the Nadaraya-Watson kernel estimator 
\begin{equation} 
\psi_{H}(x)=\frac{\sum_{i=1}^{n} K(H^{1/2}(X_i-x)) Y_{i}}{\sum_{i=1}^{n} K(H^{1/2}(X_i-x))}, \label{eq::kern_def}.
\end{equation}
Note that (\ref{eq::kern_def}) is equivalent to the classical definition (\ref{eq::kern_def0}) with $H^*=H^{-1}$ if $H$ is
positive definite, but we will allow $H$ to be positive semidefinite.  Letting $H$ be less than full rank, theoretically, allows
the above estimator to take advantage of low-dimensional structure in $\psi(x)$.
Our theoretical result on the oracle property of the Nadaraya-Watson kernel estimator relies on an extension of an oracle inequality concerning sample splitting and $K$-fold cross-validation presented
in \cite{dudoit2005asymptotics} and \cite{gyorfi2006distribution}, who considered sample splitting or cross-validation for selecting the best model from a discrete collection of models.
In practice, for kernel regression with a bandwidth matrix, specifying a discrete set of bandwidth matrices over which the $K$-fold 
cross-validation criterion is to be minimized is likely to be overly burdensome.  Our result allows for optimization
of the $K$-fold cross-validation criterion with respect to $H$ to take place over a bounded subset of the space of $p \times p$ positive semidefinite matrices.
This fact is important because it provides theoretical justification for the use of general optimization techniques such as gradient descent that rely
on selecting the optimal bandwidth matrix over a continuum of positive semidefinite matrices, rather than a discrete grid.  

The rest of the paper is organized as follows. 
In Section 2, we first discuss optimality criteria we use for assessing predictive performance of an estimator and
introduce notations for cross-validation schemes.  Then we present a general oracle inequality for estimators using $K$-fold 
cross-validation.  This result is then used to prove an oracle property for the kernel regression estimator defined in (\ref{eq::kern_def}).
In Sections 3 and 4, we assess the performance of our cross-validated kernel regression estimator in simulations and on commonly used benchmark 
data sets from the UC Irvine Machine Learning repository.   Our contributions are summarized in the discussion of Section 5.
Sections 6 contains an additional result concerning the convergence rate of a kernel regression estimator in which a Gaussian kernel is used and
recalls some facts concerning brackets and bracketing numbers.  Section 7 contains the proofs of Section 2.

\section{Main Results}
\subsection{Preliminaries}
\subsubsection{Optimality Criteria for Predictive Performance}
Given an estimator $\hat \psi$ of $\psi$ based on the observed data,
consider the squared error loss, $L(Y, \hat \psi(X))=(Y - \hat \psi(X))^{2}$, for an additional independent observation, $O=(X,Y) \sim P_{0}$. 
Define
\begin{equation}
\tilde{\theta}_{n}(\hat\psi) = E_{O}[L(Y, \hat\psi(X))] = \int L(y, \hat\psi(x))dP_{0}(x, y) \label{eq::crisk},
\end{equation}
to be the conditional risk \citep{keles2004asymptotically, dudoit2005asymptotics},  which is also referred to as the test error or generalization error \citep{friedman2001elements} or the integrated squared error ($ISE$) \citep{marron1986random}.

 Note that the conditional risk is a random variable as $\hat\psi$
depends on the observed data.  We call $E\tilde{\theta}_{n}(k)$ the marginal risk, or the expected test error, or
the mean  integrated squared error ($MISE$).  

Define $\psi(X)=E[Y|X]$ to be the true conditional expectation of $Y$ given $X$ and $\theta_{opt} = E_{O}[(Y - \psi(X))^{2}]$. Then, 
for any square integrable estimator $\hat\psi(X)$ ,
$$
\tilde{\theta}_{n}(\hat\psi) \geq \theta_{opt},
$$
 since $\psi(X)=E[Y|X]$ minimizes $E_{O}[(Y - \eta(X))^{2}]$ over all square integrable functions $\eta(X) $ of $X$ (see, e.g., Corollary 8.17 of \cite{klenke2013probability}).
Hence $\theta_{opt}$ is a lower bound for both the conditional risk and the marginal risk.
Moreover, we note that selecting the estimator $\hat\psi$ that minimizes the conditional risk is equivalent to minimizing the $\tilde{\theta}_{n}(\hat\psi) - \theta_{opt}$  and that  
$\tilde{\theta}_{n}(\hat\psi) - \theta_{opt} = \int ({\hat\psi}(x) - \psi(x))^{2}dP_{0,X}(x),$
where  $P_{0,X}$ is the marginal 
distribution of $X$.   We refer to $\tilde{\theta}_{n}(\hat\psi) - \theta_{opt}$ as the conditional excess risk and 
$E\tilde{\theta}_{n}(\hat\psi) - \theta_{opt}$ as the marginal excess risk.

Because the true distribution $P_0$ of the observations is unknown,  the conditional risk  must be estimated.  
A natural estimator of the conditional risk would be $\int L(y, \hat\psi(x))dP_{n}(x, y)$, where $P_{n}$ is the empirical distribution of the observations.
Unfortunately, this estimate may be highly optimistic because the data has been used twice to  
first produce the estimator, $\hat\psi$ and then obtain
the estimate of the conditional risk.  A sample splitting procedure, whereby a (training) subset of the data is set aside to produce $\hat\psi$
and a separate (validation) subset of the data is used to estimate $\tilde{\theta}_{n}(\hat\psi)$, would produce an estimate
of the conditional risk that is less prone to this negative bias.  Cross-validation schemes for estimating the conditional risk are elaborations of
the aforementioned sample splitting procedure, in which  observations alternate
 in their role of training and validation as described in the next section. 

 
\subsubsection{A Formal Explication of Cross-Validation}
Let $\{ \psi_k(x): k \in \Xi_{n} \}$ denote a collection of estimators of $\psi(x)$ indexed by $k$, where 
$\Xi_{n}\subset R^d$ for some positive integer $d$. For simplicity, we abuse the notation by using $k$ for $\psi_k$ from now on.
Below we formally present the concept of cross-validation, giving attention to the case in which the index $k$ runs through a continuous range of values.     
 
We denote a split of the data into training and validation sets via the binary vector
$S_{n} =(S_{n1},\ldots, S_{nn})^T \in \{0, 1\}^{n}$, where
\begin{equation*}
S_{ni} = \begin{cases}
0, & \text{if observation $i$ is in the training set}, \\
1, & \text{if observation $i$ is in the validation set},
\end{cases}
\end{equation*}
and the set $\{0, 1\}^{n}$ represents all possible splits of the data into training and validation sets. 
Define $P_{n,S_{n}}^{0}$ and $P_{n, S_{n}}^{1}$ as the empirical distribution of the observations in the training and validation set, respectively.
Let $\psi_{k}=\psi_{k}(X|P^{0}_{n, S_{n}})$ be an estimator produced by applying an estimation procedure to observations in the training set 
determined by $S_{n}$.  Define the conditional expectation,
given the observations in the training set, of a function $f(O, \psi_{k}(X|P^{0}_{n, S_{n}}))$ by $E[f(O, \psi_{k})|P^{0}_{n, S_{n}}]$,
where $f$ is a function depending on an independent observation $O \sim P_{0}$ and the estimator $\psi_{k}(X|P^{0}_{n, S_{n}})$  .

From now on, we will assume that all data splits devote the same proportion of observations, $1-\pi$, to
training. A cross-validation scheme is defined by assigning a set of $nsplit$ probability weights $w_{1},\ldots,w_{nsplit}$ such that
$w_{j} > 0$ and $\sum_{j=1}^{nsplit}w_{j}=1$ to a subset of $nsplit$ elements of $\{0, 1\}^{n}$.  The corresponding 
cross-validation criterion is then 
defined as  
\begin{flalign}
&\hat{\theta}_{n(1-\pi)}^{CV}(k) = E_{S_{n}}\int L(y, \psi_{k}(x|P^{0}_{n,S_{n}}))dP^{1}_{n,S_{n}}(x, y)\\ \label{eq::cvalcrit}
&\hspace{1.8cm}= \sum_{j=1}^{nsplit}w_{j}\int L(y, \psi_{k}(x|P^{0}_{n,S_{n}^{(j)}}))dP^{1}_{n,S_{n}^{(j)}}(x, y).
\end{flalign}

The $K$-fold cross-validation scheme is defined by splitting the $n$ observations into $K$ distinct subsets.  
This partition of the observations results in $K$ binary vectors $S_{n}^{(1)},\ldots, S_{n}^{(K)}$ where $S^{(j)}_{n}$ is
created by letting observations in the $j$th element of the partition serve as the validation set.  After minor modification,
we may take $\pi = \floor{n/K}/n$.  The $K$-fold cross-validation scheme then puts a probability weight of $w_j=1/K$ on each of these $K$ binary vectors.

A natural benchmark for a cross-validation scheme is
\begin{flalign}
&\tilde{\theta}^{CV}_{n(1-\pi)}(k)=E_{S_{n}}\int L(y, \psi_{k}(x|P^{0}_{n,S_{n}}))dP_{0}(x, y)\label{eq::cvbench}\\ 
&\hspace{1.8cm}=\sum_{j=1}^{nsplit}w_{j}\int L(y, \psi_{k}(x|P^{0}_{n,S_{n}^{(j)}}))dP_{0}(x, y),\label{eq::cvbenchsum}
\end{flalign}
which is referred to as the cross-validation benchmark, or the ``commensurate optimal benchmark"
\citep{dudoit2005asymptotics}). 
The cross-validation benchmark $\tilde{\theta}^{CV}_{n(1-\pi)}(k)$ 
can be regarded as a cross-validation criterion when an infinite
number of observations are available for validation.
Although minimization of the cross-validation benchmark is not equivalent to minimization of the conditional risk over
estimators that use all $n$ observations, for $K$-fold cross-validation, the following relationship holds:  $E\tilde{\theta}^{CV}_{n(1-\pi)}(k)=E\tilde{\theta}_{n(1-\pi)}(k)$, where $\tilde{\theta}_{n(1-\pi)}(k)$ denotes the conditional risk of $\psi_k$. 
Thus, the cross-validation benchmark has mean equal to the marginal risk based on approximately $n(1 - \pi)$ observations rather
than $n$ observations.  

If minimization of the cross-validation criterion is carried out over a continuous range $\Xi_n$ of 
$\mathcal{R}^{d}$, 
there may not exist a $\hat{k}_{n} \in \Xi_{n}$ such that $\hat{\theta}_{n(1-\pi)}^{CV}(\hat{k}_{n}) = \inf_{k \in \Xi_{n}}\hat{\theta}_{n(1-\pi)}^{CV}(k)$.
For this reason, 
we will consider a $\gamma$-suboptimal point  $\hat{k}_{n} \in \Xi_{n}$  as in \cite{boyd2004convex} 
such that 
$\hat{\theta}_{n(1-\pi)}^{CV}(\hat{k}_{n}) \leq \text{inf}\{\hat{\theta}_{n(1-\pi)}^{CV}(k): k\in \Xi_{n}\}  + \gamma$, for some pre-specified $\gamma > 0$.
Note that a $\gamma$-suboptimal point is not necessarily unique. If a minimizer exists in $\Xi_{n}$, then it can be considered as a $\gamma$-suboptimal  point with $\gamma=0$.  

\subsection{An Oracle Property of the Kernel Regression Estimator}

Our main theoretical result relies on an extension of an oracle inequality presented
in \cite{dudoit2005asymptotics} and \cite{gyorfi2006distribution}, who considered sample splitting or cross-validation for selecting the best model from a discrete collection of models.
Our extension below allows for optimization
of the $K$-fold cross-validation criterion to take place over a continuous bounded subset of the space of $p \times p$ positive semidefinite matrices and consequently enables selecting the best model from a continuum of models.

\subsubsection{An Oracle Inequality for  $K$-Fold Cross Validation}
We now present an oracle inequality that demonstrates that $K$-fold cross-validation produces an estimator that
has near optimal performance according to the cross-validation benchmark.   
Let $\hat{k}_{n}$ and $\tilde{k}_{n}$ be the $\gamma$-suboptimal minimizers
of the $K$-fold cross-validation criterion $\hat{\theta}_{n(1-\pi)}^{CV}(k)$ and its cross-validation benchmark $\tilde{\theta}^{CV}_{n(1-\pi)}(k)$, respectively, over $\Xi_n$.  

The following assumptions will be needed.
\begin{itemize}

\item[(A1)] There exists a constant $M>0$ such that $Pr(|Y| < M) = 1$ and $sup_{X}|\psi_{k}(X|P_{n})| \leq M < \infty$ almost surely for all $k \in \Xi_n$, where $M$ does not depend on $n$ and the supremum is over the support of the marginal distribution of $X$.  
\item[(A2)]  $\Xi_{n}$ is a bounded set.  
\item[(A3)]  Assume that with probability 1, $L(Y, \psi_{k}(X|P_{n})) - L(Y, \psi(X))$, as a function of $k$, is Lipshitz continuous
with Lipshitz constant $C$.  This Lipshitz constant, $C$, does not depend on the training set, $P_{n}$.
\item[(A4)]  Let $c_{1}(n\pi, d, \Xi_{n}, M, \delta)=  \log\Big\{\Big(\frac{n\pi}{c_{2}(M, \delta)}\Big)^{d}4(4\sqrt{d}C^{2}4(1+ 2\delta)\text{diam}(\Xi_{n}))^{d}\Big\}$ and
let $c_{2}(M, \delta) = 8(1 + \delta)^{2}(\frac{M_{2}}{\delta} + \frac{M_{1}}{3})$, where $M_{1}=8M^{2}$ and $M_{2}=16M^{2}$. Assume $\delta > 0$ is such that $1/c_{2}(M, \delta) \leq M_{1}8(\frac{1}{(1 + 2\delta)}+\frac{1}{3})$ and take $n$ large enough that
$c_{1}(n\pi, d, \Xi_{n}, M, \delta) > 1$. 
\end{itemize}
The derivation of our result in the theorem below requires that terms of the form $L(Y, \psi_{k}(X|P_{n})) - L(Y, \psi(X))$, where $P_{n}$ depends on the split,
converge to their expectation uniformly over $k \in \Xi_{n}$.  Intuitively, if $L(Y, \psi_{k}(X|P_{n})) - L(Y, \psi(X))$ converges
to its expectation uniformly in $k$, then the $K$-fold cross-validation criterion will converge to the $K$-fold cross-validation benchmark uniformly in $k$. 
Assumptions (A1), (A2), and (A3) ensure this occurs.  Assumption (A4) is a technical condition primarily serving to simplify the result.  

\begin{theorem}
\label{thm::main_extn}
Assume the above assumptions (A1)-(A4) hold.
Then, 
\begin{flalign}
&0 \leq E\tilde{\theta}^{CV}_{n(1-\pi)}(\hat{k}_{n}) - \theta_{opt} \leq (1+2\delta)(E\tilde{\theta}^{CV}_{n(1-\pi)}(\tilde{k}_{n}) - \theta_{opt})  + \label{eq::oracle_err}&\\
&\-\hspace{4.5cm} \frac{4c_{2}(M,\delta)}{n\pi}c_{1}(n\pi, d, \Xi_{n}, M, \delta)+\label{eq::cv_err}&\\
&\-\hspace{4.5cm} (1 + \delta)\gamma.                                       
\end{flalign}
\end{theorem}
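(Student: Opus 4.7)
The plan is to reduce Theorem~\ref{thm::main_extn} to the discrete-index oracle inequality of \cite{dudoit2005asymptotics} and Chapter~7 of \cite{gyorfi2006distribution} via a covering-number argument on $\Xi_n$. Specifically, I would (i) construct a deterministic $\epsilon$-net $\Xi_n^\epsilon \subset \Xi_n$, (ii) invoke the existing discrete oracle inequality on the finite set $\Xi_n^\epsilon$, and (iii) use the Lipschitz condition (A3) to pass between continuum minimizers on $\Xi_n$ and their nearest-neighbor counterparts on $\Xi_n^\epsilon$ at cost $C\epsilon$ per transfer. Balancing the covering cardinality against the Lipschitz transfer error will manufacture the $c_1(n\pi, d, \Xi_n, M, \delta)/(n\pi)$ penalty on the right-hand side.

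For step (i), (A2) implies that $\Xi_n$ has finite diameter $D:=\text{diam}(\Xi_n)$ and is contained in an axis-aligned cube of side $D$ in $\mathcal{R}^d$; subdividing this cube into subcubes of side $\epsilon/\sqrt d$ produces an $\epsilon$-net with cardinality at most $(D\sqrt d/\epsilon)^d$. For step (ii), the discrete oracle inequality, whose proof uses (A1) to bound $|Y - \psi_k(X|P_n)|^2 \leq M_1 = 8M^2$ and then applies Bernstein's inequality together with a union bound over $\Xi_n^\epsilon$ at tolerance $\delta$ satisfying (A4), yields for the $\gamma$-suboptimal minimizers $\hat k_n^\epsilon, \tilde k_n^\epsilon$ restricted to $\Xi_n^\epsilon$,
\[
E\tilde\theta^{CV}_{n(1-\pi)}(\hat k_n^\epsilon) - \theta_{opt} \leq (1+2\delta)\bigl(E\tilde\theta^{CV}_{n(1-\pi)}(\tilde k_n^\epsilon) - \theta_{opt}\bigr) + \frac{2c_2(M,\delta)(1+\log|\Xi_n^\epsilon|)}{n\pi} + (1+\delta)\gamma.
\]

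For step (iii), (A3) implies that the maps $k\mapsto \hat\theta^{CV}_{n(1-\pi)}(k)$ and $k\mapsto \tilde\theta^{CV}_{n(1-\pi)}(k)$ are almost surely $C$-Lipschitz, because Lipschitz continuity is preserved under integration against $P_0$ or $P^1_{n,S_n}$ and under convex combinations across splits. Consequently, if $\hat k_n^\ast \in \Xi_n^\epsilon$ denotes the nearest neighbor of the continuum $\gamma$-suboptimal minimizer $\hat k_n$, then $\hat k_n^\ast$ is itself a $(\gamma+C\epsilon)$-suboptimal minimizer over $\Xi_n^\epsilon$ (so the discrete inequality applies with $\hat k_n^\epsilon = \hat k_n^\ast$), $\tilde\theta^{CV}(\hat k_n^\ast) \geq \tilde\theta^{CV}(\hat k_n) - C\epsilon$ on the left-hand side, and $\inf_{\Xi_n^\epsilon}\tilde\theta^{CV} \leq \tilde\theta^{CV}(\tilde k_n) + C\epsilon + \gamma$ on the right. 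Substituting all three estimates into the discrete bound yields the target inequality up to additive residuals of order $C\epsilon\bigl(1+(1+2\delta)\bigr)$, which are to be absorbed by tuning $\epsilon$.

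The final step is to take $\epsilon$ proportional to $c_2(M,\delta)/\bigl(C^2(1+2\delta)\,n\pi\bigr)$; the extra power of $C$ arises because the Lipschitz transfer is applied to both the criterion and the benchmark, which is the source of the $C^2$ inside $c_1$. This choice absorbs the $C\epsilon$ residuals into the $c_2/(n\pi)$ term while giving $\log|\Xi_n^\epsilon| \leq d\log\bigl(4\sqrt d\, C^{2}\,4(1+2\delta)D\,n\pi / c_2(M,\delta)\bigr)$, which together with the leading factor reproduces exactly the expression defining $c_1(n\pi,d,\Xi_n,M,\delta)$. The main obstacle is purely bookkeeping: one must push the $\gamma$-suboptimality, the $C\epsilon$ discretization errors, and the $(1+2\delta)$ inflation through the expectation in the correct order so that the constants and factors of $4$ inside $c_1$ and $c_2$ come out exactly as stated in (A4); the condition $c_1 > 1$ and the bound on $1/c_2(M,\delta)$ in (A4) are precisely what is required to keep the logarithmic residual nonnegative and the Bernstein second-moment estimate consistent. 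A subordinate concern is the measurability of the random continuum minimizer $\hat k_n$, which is resolved automatically because the concentration/union bound underlying the discrete oracle inequality is applied only over the deterministic finite set $\Xi_n^\epsilon$.
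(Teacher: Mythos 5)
Your argument is correct in substance but takes a genuinely different route from the paper. The paper does \emph{not} discretize $\Xi_n$ and invoke the finite-collection oracle inequality as a black box; instead it reruns the Dudoit--van der Laan decomposition on the continuum, reduces $P(R_{n\hat k}(S_n^{(j)})>s)$ to a supremum over $k\in\Xi_n$, and controls that supremum by an $L_2$ \emph{bracketing} of the loss-difference class $\{Z_k^{S_n^{(j)}}\}$ at a resolution $\epsilon = s/(4(1+2\delta))$ that varies with the deviation level $s$. The bracket-approximation error is absorbed twice: once in the empirical average (via a second Bernstein bound on $|u_v-l_v|$) and once in the variance-normalized term $\delta\,\mathrm{Var}(Z_k)/M_2$ (via Lemma 6.2 relating variances within a bracket); the Lipschitz condition (A3) enters only to bound the bracketing number through the covering number of $\Xi_n$, and the tail bound is then integrated to produce $c_1 c_2/(n\pi)$. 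Your single-scale $\epsilon$-net plus Lipschitz-transfer argument is more modular: it reuses the discrete result wholesale, avoids re-deriving the Bernstein/variance machinery, and, as you note, makes measurability of the union bound trivial since the net is deterministic; the price is that the transfer must be threaded through three places (the near-optimality of $\hat k_n^*$ on the net, the benchmark at $\hat k_n$ versus $\hat k_n^*$, and the discrete benchmark minimizer versus $\tilde k_n$), each costing $C\epsilon$, and the resulting constants in front of and inside the logarithm will not land exactly on the $4c_2\cdot c_1$ expression that (A4) is tailored to --- you would obtain the same form and rate with your own (equally ad hoc) constants. Two small points to watch: the discrete oracle inequality must be applied to a $(\gamma+C\epsilon)$-suboptimal rather than exact minimizer over the net (its proof accommodates this, contributing $(1+\delta)(\gamma+C\epsilon)$), and your choice $\epsilon \propto c_2/(C^2(1+2\delta)n\pi)$ makes the residual $C\epsilon \propto c_2/(C(1+2\delta)n\pi)$, which is only dominated by $c_2/(n\pi)$ when $C(1+2\delta)\ge 1$; taking $\epsilon$ as the minimum of this and $c_2/(Cn\pi)$ fixes the bookkeeping without changing the order of the logarithmic factor.
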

\begin{remark}
 Our finite sample result in Theorem \ref{thm::main_extn} suggests a trade-off with regard to the 
 choice of what proportion of observations to use for validation versus training for the upper-bound of the above inequality. 
 The expression on the right hand side of the above inequality depends heavily on the number of observations 
 used for validation: $n\pi$.   If $\pi$ is too small, the number of observations used for validation will be small
 and the term in (\ref{eq::cv_err}) will be large, which signals difficulty in identifying the model
 that minimizes the cross-validation benchmark.  If $\pi$ is large, the expectation of the optimal 
 cross-validation benchmark based on training sets of size $n(1-\pi)$
 may be a poor approximation for the expectation of the optimal conditional risk based on a training set of size $n$.  
A similar trade-off has been observed with regard to the choice of $K$ in \citep{kohavi1995study,friedman2001elements,james2013introduction}.
\cite{arlot2010survey}, perhaps providing a more refined discussion of this issue, suggest that the
choice of $K$ should depend, in part, on the signal-to-noise ratio.
\end{remark}

\begin{remark}
Because $E\tilde{\theta}^{CV}_{n(1-\pi)}(\tilde{k}_{n}) - \theta_{opt})$ is less than $E\tilde{\theta}^{CV}_{n(1-\pi)}(\hat{k}_{n}) - \theta_{opt}$, the
above inequality states that on average cross-validation selects an estimator with close to optimal performance if $n\pi$ large, where, once again,
performance is measured by the cross-validation benchmark.
Note that the tightness of the above inequality depends on $n$, $\pi$, $d$, and $diam(\Xi_{n})$. 
The bound becomes looser for larger values of $d$ and $diam(\Xi_{n})$.  As $n\pi$ grows,
the bound becomes tighter.
\end{remark}

The above oracle inequality in Theorem \ref{thm::main_extn} is derived for any collection of estimators indexed by $k$
that ranges over a continuous bounded subset $\Xi_n$ of $R^d$, {satisfying (A1)-(A4)}. In the next section, we apply Theorem \ref{thm::main_extn} to the Nadaraya-Watsion kernel regression estimator defined in (\ref{eq::kern_def}) with a Gaussian kernel $K(u)= \exp(-||u||^{2})$, indexed by a bandwidth matrix $H$.  
We also show that for this Nadaraya-Watson kernel regression estimator,  a minimizer  (corresponding to $\gamma=0$) exists for
both the $K$ fold cross-validation criterion and the $K$-fold cross-validation benchmark.   

\subsubsection{An Oracle Property for a Nadaraya-Watson Estimator with a Matrix Valued Bandwidth}
As in \cite{boyd2004convex}, we represent the set of symmetric positive semidefinite matrices, $S^{p}_{+}$, as elements
in $\mathcal{R}^{p(p+1)/2}$.  Explicitly, we represent a particular matrix $H$ as $(h_{1,1},h_{2,2},\ldots,h_{p,p},\\h_{2,1},h_{3,1},\ldots,h_{p,p-1})'$. 
The Frobenius norm of a symmetric matrix, $H$, viewed as an element in $\mathcal{R}^{p(p+1)/2}$, 
is defined as $||H||_{F}=(\sum_{i=1}^{p}h_{ii}^{2} + 2\sum_{j < i}h_{ij}^{2})^{1/2}$.  
To ensure that $\Xi_{n}$ is bounded (A2), we will let $\Xi_{n}$ 
consist of the elements of $S^{p}_{+} \subset  \mathcal{R}^{p(p+1)/2}$ with Frobenius norm less than or equal to $\lambda_{n}$
for some  $\lambda_{n}>0$.

\begin{theorem}
\label{thm::kern_main}
Consider the class of kernel regression estimators $\psi_{H}$, defined in (\ref{eq::kern_def}), where $K(u)=\exp(-||u||^{2})$ is the Gausian kernel function and 
$H$ is selected via $K$-fold cross-validation.  Assume that $P(|Y| < M)=1$ and that $(A4)$ holds.  
Assume  further that there exists a constant $B > 0$ such that $P(||X|| < B)=1$.  
Then, we have the following finite sample result. 
\begin{itemize}
\item[(a)] Minimizers of the $K$-fold cross-validation criterion and
the $K$-fold cross-validation benchmark with respect to $H$ exist in $\Xi_{n}$.  

\item[(b)]
Denote by $\hat{H}_{n} \in \Xi_{n}$ a minimizer of the $K$-fold cross-validation criterion and
$\tilde{H}_{n} \in \Xi_{n}$ a minimizer of the $K$-fold cross-validation benchmark. Then,
the assumptions $(A1)$, $(A2)$, and $(A3)$ are 
satisfied and consequently the inequality of Theorem \ref{thm::main_extn} holds, with $\gamma=0$, $d=p(p + 1)/2$
and $C=64\sqrt{p(p+1)/2}B^{2}M^{2}$. 

\item[(c)]
Let $\psi_{n(1-\pi), \hat{H}_{n}}(x|P^{0}_{n, S_{n}^{(j)}})$ be the kernel regression estimator obtained by using $\hat{H}_{n}$ from part (b) 
and the $j$th training sample for any $1 \leq j \leq K$.
Assume $\psi(X)=\phi(Tx)$ is a multi-index model defined such that 
$\phi:\mathcal{R}^{m}\rightarrow \mathcal{R}$ is Lipshitz continuous with Lipshitz constant $R$ and $T$ is an $m\times p$ orthogonal matrix.
Let $\lambda_{n} = \sqrt{p}V(\log(n)n)^{1/3}$, where $V > 0$ is a positive constant.
Then, as $n\to \infty$,
\begin{flalign}
&\-\hspace{3.975cm}E\tilde{\theta}_{n(1-\pi)}^{CV}(\hat{H}_{n}) - \theta_{opt}=&\\
&E\int (\psi_{n(1-\pi),\hat{H}_{n}}(x|P^{0}_{n, S_{n}^{(j)}}) - \psi(x))^{2}dP_{0,X}(x) = O(\log(n(1 - \pi))^{\frac{m}{m + 2}}(n(1 - \pi))^{\frac{-2}{m + 2}}).
\end{flalign}
\end{itemize}

\end{theorem}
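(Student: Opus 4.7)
Parts (a) and (b) are verifications that I would handle first. For (a), $\Xi_n$ is a closed, bounded subset of $\mathcal{R}^{p(p+1)/2}$ and hence compact. Under the Gaussian kernel, $K(H^{1/2}(X_i - x)) = \exp(-(X_i - x)^T H (X_i - x))$ is smooth in $H$, and because $\|X_i - x\| \leq 2B$ and $\|H\|_F \leq \lambda_n$ on $\Xi_n$, the denominator of $\psi_H(x)$ is bounded below by a positive constant. Thus $\psi_H(x)$ is continuous in $H$; the cross-validation criterion is a finite sum of such terms, and the benchmark is continuous by dominated convergence (its integrand is uniformly bounded by $4M^2$). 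Continuous functions on a compact set attain their infima. For (b), (A1) holds because $\psi_H(x)$ is a convex combination of the $Y_i$'s which are bounded by $M$, and (A2) holds by construction. For (A3), I use $|L(y, a) - L(y, b)| = |a - b|\,|a + b - 2y| \leq 4M|a - b|$ for $|y|,|a|,|b| \leq M$, reducing to a Lipschitz bound on $\psi_H(x)$ in $H$. Differentiating the Gaussian kernel yields $\partial K_i/\partial H_{uv} = -c_{uv}(X_i - x)_u(X_i - x)_v K_i$ with $c_{uv}\in\{1,2\}$, so by the quotient rule,
\[
\frac{\partial \psi_H(x)}{\partial H_{uv}} = \frac{\sum_i (Y_i - \psi_H(x))\, \partial K_i/\partial H_{uv}}{\sum_i K_i}.
\]
The factor $K_i$ cancels out through the ratio, giving $|\partial \psi_H(x)/\partial H_{uv}| \leq 2M \cdot 8 B^2 = 16 M B^2$ uniformly on $\Xi_n$; aggregating over $p(p+1)/2$ coordinates then delivers the stated Lipschitz constant $C = 64\sqrt{p(p+1)/2}\,B^2 M^2$.

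For (c), I apply the oracle inequality from Theorem \ref{thm::main_extn}. The residual $(4c_2(M,\delta)/(n\pi))\,c_1(n\pi, d, \Xi_n, M, \delta)$ depends on $\lambda_n$ only through $\log(\text{diam}(\Xi_n))$, so with $\lambda_n = \sqrt{p}\,V(\log(n) n)^{1/3}$ this term is $O(\log(n)/n)$, which is $o(n^{-2/(m+2)})$ for $m \geq 1$ and thus negligible. To bound the benchmark term, I use the minimality of $\tilde{H}_n$: for any fixed $H^* \in \Xi_n$,
\[
E\tilde{\theta}^{CV}_{n(1-\pi)}(\tilde{H}_n) - \theta_{opt} \leq E\int (\psi_{n(1-\pi), H^*}(x|P_{n, S_n^{(j)}}^0) - \psi(x))^2 \, dP_{0,X}(x).
\]
I choose $H^* = h_n T^T T$, whose Frobenius norm is $h_n\sqrt{m}$ (since $T^T T$ is a rank-$m$ orthogonal projection), so $H^* \in \Xi_n$ provided $h_n \leq \lambda_n/\sqrt{m}$. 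Under this choice, $K(H^{*1/2}(X_i - x)) = \exp(-h_n \|T(X_i - x)\|^2)$ depends only on the $m$-dimensional projection $TX$, and since $\psi(X) = \phi(TX)$ with $\phi$ Lipschitz on a bounded subset of $\mathcal{R}^m$, the estimator is effectively a Gaussian kernel regression in dimension $m$ rather than $p$. Invoking the Gaussian-kernel rate result from Section 6 with suitably chosen $h_n$ then produces the benchmark bound $O(\log(n(1 - \pi))^{m/(m+2)}\,(n(1-\pi))^{-2/(m+2)})$, and combining with the residual estimate finishes the proof.

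The principal obstacle is part (c): while the reduction from the $p$-dimensional problem to an $m$-dimensional one via the rank-$m$ bandwidth $h_n T^T T$ is conceptually clean, the quantitative MISE rate (in particular the $\log^{m/(m+2)}$ factor, which reflects bracketing-entropy control of the Gaussian-kernel function class) must be imported from the dedicated analysis in Section 6. A secondary but purely mechanical obstacle is the constant tracking in part (b), where the explicit form of $C$ emerges only after carefully exploiting the cancellation of $K_i$ in the quotient-rule calculation; without this cancellation one would get a bound that blows up with $\lambda_n$.
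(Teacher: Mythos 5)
Your proposal is correct and follows essentially the same route as the paper: compactness plus continuity for (a); verification of (A1)--(A3) via the bounded partial derivatives of the loss in $H$ (arriving at the same constant $64\sqrt{p(p+1)/2}B^{2}M^{2}$, merely arranged as loss-Lipschitz times estimator-gradient rather than differentiating the composite directly) for (b); and for (c) the comparison of $\tilde{H}_{n}$ against the rank-$m$ competitor $h_{n}T'T$, reduction to an $m$-dimensional Gaussian-kernel regression handled by Lemma \ref{lemma::kern_lemma}, and the observation that the remainder term is $O(\log(n)/n)$ and hence negligible. The only cosmetic difference is your tighter Frobenius-norm bound $\|h_{n}T'T\|_{F}=h_{n}\sqrt{m}$ versus the paper's $\sqrt{p}\,h_{n}(m)$; both place the competitor inside $\Xi_{n}$.
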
  
\begin{remark}
The result of part (c) is an analogue to the result (\ref{sclr_rate}).
In contrast to the result (\ref{sclr_rate}), the rate of convergence of the above estimator depends on $m \leq p$.
\end{remark}


\begin{remark}
If 
\begin{equation}
                \frac{c_{1}(n\pi, d, \Xi_{n}, M, \delta)}{(n\pi)\{E\tilde{\theta}^{CV}_{n(1-\pi)}(\tilde{k}) - \theta_{opt} \}}  \rightarrow 0 \text{  as $n$} \rightarrow \infty,
\end{equation}
then it follows, by dividing both sides of the oracle inequality in Theorem 2.1, that 
\begin{equation}
 \frac{E\tilde{\theta}^{CV}_{n(1-\pi)}(\hat{k}) - \theta_{opt}}{E\tilde{\theta}^{CV}_{n(1-\pi)}(\tilde{k}) - \theta_{opt}}  \rightarrow 1  \text{  as $n$} \rightarrow \infty.  \label{eq::erat}
 \end{equation}
This implies that the estimator produced by $K$-fold cross-validation estimator has asymptotically optimal predictive performance as measured
by cross-validation benchmark.

Assuming that $\lambda_{n}$ grows at the rate specified in Theorem \ref{thm::kern_main},
the above condition will then be satisfied if $n\pi(E\tilde{\theta}^{CV}_{n(1-\pi)}(\tilde{H}_{n}) - \theta_{opt})$ increases at a
polynomial rate, $n^{\gamma}$, where $0 < \gamma < 1$.  As $n^{-1}$ is a ``parametric rate" of convergence we would expect 
$E\tilde{\theta}^{CV}_{n(1-\pi)}(\tilde{H}_{n}) - \theta_{opt}$ to decrease at a rate slower than $n^{-1}$.  Thus, we expect 
the condition to hold in many circumstances.  In fact, Remark 4 of \cite{antos2000lower} provides an example 
of a class of distribution within which distributions can be found such that it holds.
\end{remark}

\begin{remark}
The $K$-fold cross-validation criterion is differentiable with respect to $H$ for the Nadaraya-Watson estimator
we consider.  The derivative is presented in the Supplementary material.  To find $\hat{H}_{n}$, we have 
implemented a variant of the gradient descent algorithm presented in \cite{weinberger2007metric}.  Our simulation
results and the data analysis demonstrate that this algorithm is capable of finding acceptable local minimizers.
Development of more sophisticated algorithms for finding global minimizers as well as improvement of computational
speed warrants further research.     
\end{remark}

%

\section{Simulations}
In this section, we present the results of a simulation study to
illustrate the performance of the kernel regression estimator using a matrix
valued bandwidth when the true regression model is single or two-index.
We also compare it with a kernel regression estimator
using a scalar valued bandwidth and an oracle kernel regression estimator that 
is derived by calculating the true indices and carrying out kernel
regression using these indices as covariates.
We use 10-fold cross-validation to estimate the optimal scalar-valued and matrix-valued bandwidth parameter.
To find the matrix-valued bandwidth, we applied gradient descent algorithm
to minimize the 10-fold cross-validation criterion.  A grid search was used to find the scalar-valued bandwidth.  
For the oracle estimator, a grid search was used to determine its optimal smoothing parameter, $\tilde{h}_{n}$,
with the goal of minimizing the conditional risk $\tilde{\theta}_{n}(h)$, where
$\tilde{h}_{n}$ is scalar for the single-index model and a 2-dimensional vector for the 2-index model.
We also demonstrate how the performance of kernel regression, parametrized by a bandwidth matrix, degrades
as the number of covariates increases.  


In all simulations, the covariates are independent and standard normal random variables.   
For the single index regression model we set
\begin{equation}
Y = 2X_{1} + 2X_{2} + X_{3} + X_{4} + \epsilon \label{eq::sir_model},
\end{equation}
where $\epsilon \sim N(0, \sigma^{2})$.
The standard deviation of the error term, $\sigma$, has been set to $0.15$ and $0.3$. 
Additional ``noise" covariates (covariates with coefficient equal to 0) have been added such that
$p$ varies from 5, 10, to 20.
For the two-index regression model, we set 
\begin{equation}
Y = T_{1}\sin(\sqrt{5}T_{2}) + T_{2}\sin(\sqrt{5}T_{1}) + \epsilon, \label{eq::dir_model}
\end{equation}
which was explored previously in \cite{polzehl2009note},
where the two indices are defined as
$T_{1}=X_{1}/\sqrt{5}  + 2X_{2}/\sqrt{5}$ and $T_{2}=-2X_{1}/3  + X_{2}/3 + 2X_{3}/3$,
and $\epsilon \sim N(0, \sigma^{2})$.  Once again, additional noise covariates have been added such that $p$ varies from 5, 10, to 20
and $\sigma$ takes the values 0.15 and 0.3.

The performance of these estimators is measured by taking the square root of the mean squared error (RMSE)
on a test set of size 10,000.  Each simulation scenario was repeated 100 times.  The mean of the RMSEs over the 100 simulations 
as well as 95\% confidence intervals for each simulation scenario are presented in Table \ref{tab::sim_results}.

The kernel regression estimator using the matrix-valued bandwidth 
outperforms the kernel regression estimator using a scalar-valued bandwidth over all simulation
scenarios.  The rate of convergence  of the kernel regression estimator using a scalar-valued bandwidth 
is slow enough that the RMSE changes by very small amounts as the sample size increases.  We found that the
scalar bandwidth selected by 10-fold cross-validation led to significant over-smoothing for all
simulation scenarios.  

%
%
%

The performance of the kernel regression estimator using a matrix-valued bandwidth does indeed degrade as
additional noise covariates are included, however, its performance is substantially improved as the sample size
increases.  For a sample size of 1,000, even when $p=20$, this estimator is comparable to that of the oracle kernel regression
estimator.  

\begin{table}
\centering
\begin{tabular}{ccccccc}
  \hline
$n$ & $p$ & SD & Index \# & Scalar Bandwidth & Matrix Bandwidth & Oracle Estimator \\ 
  \hline
250 & 5 & 0.15 & 1 & 1.835 (1.833,1.838) & 0.165 (0.164,0.167) & 0.164 (0.162,0.165) \\ 
  250 & 10 & 0.15 & 1 & 1.837 (1.834,1.839) & 0.171 (0.169,0.173) & 0.167 (0.165,0.169) \\ 
  250 & 20 & 0.15 & 1 & 1.835 (1.833,1.837) & 0.175 (0.173,0.177) & 0.166 (0.164,0.168) \\ 
  500 & 5 & 0.15 & 1 & 1.834 (1.831,1.836) & 0.158 (0.157,0.158) & 0.157 (0.156,0.158) \\ 
  500 & 10 & 0.15 & 1 & 1.834 (1.831,1.836) & 0.158 (0.158,0.159) & 0.157 (0.156,0.158) \\ 
  500 & 20 & 0.15 & 1 & 1.834 (1.831,1.836) & 0.161 (0.16,0.162) & 0.157 (0.157,0.158) \\ 
  1000 & 5 & 0.15 & 1 & 1.832 (1.83,1.835) & 0.154 (0.154,0.154) & 0.153 (0.153,0.154) \\ 
  1000 & 10 & 0.15 & 1 & 1.832 (1.829,1.834) & 0.154 (0.154,0.155) & 0.153 (0.153,0.154) \\ 
  1000 & 20 & 0.15 & 1 & 1.833 (1.83,1.835) & 0.155 (0.154,0.155) & 0.153 (0.153,0.153) \\ 
  250 & 5 & 0.30 & 1 & 1.852 (1.849,1.854) & 0.318 (0.316,0.319) & 0.313 (0.312,0.315) \\ 
  250 & 10 & 0.30 & 1 & 1.853 (1.851,1.855) & 0.325 (0.323,0.327) & 0.314 (0.312,0.315) \\ 
  250 & 20 & 0.30 & 1 & 1.854 (1.852,1.857) & 0.369 (0.363,0.376) & 0.313 (0.312,0.314) \\ 
  500 & 5 & 0.30 & 1 & 1.852 (1.85,1.855) & 0.309 (0.309,0.31) & 0.307 (0.307,0.308) \\ 
  500 & 10 & 0.30 & 1 & 1.851 (1.848,1.853) & 0.311 (0.311,0.312) & 0.307 (0.306,0.307) \\ 
  500 & 20 & 0.30 & 1 & 1.854 (1.852,1.856) & 0.338 (0.331,0.345) & 0.307 (0.307,0.308) \\ 
  1000 & 5 & 0.30 & 1 & 1.852 (1.849,1.854) & 0.305 (0.304,0.305) & 0.304 (0.303,0.304) \\ 
  1000 & 10 & 0.30 & 1 & 1.852 (1.85,1.855) & 0.306 (0.305,0.306) & 0.304 (0.303,0.304) \\ 
  1000 & 20 & 0.30 & 1 & 1.852 (1.849,1.854) & 0.312 (0.309,0.314) & 0.304 (0.303,0.304) \\ \hline
  250 & 5 & 0.15 & 2 & 0.787 (0.786,0.788) & 0.189 (0.188,0.19) & 0.187 (0.186,0.188) \\ 
  250 & 10 & 0.15 & 2 & 0.789 (0.788,0.79) & 0.196 (0.195,0.197) & 0.187 (0.186,0.188) \\ 
  250 & 20 & 0.15 & 2 & 0.788 (0.787,0.789) & 0.228 (0.222,0.234) & 0.187 (0.186,0.189) \\ 
  500 & 5 & 0.15 & 2 & 0.788 (0.786,0.789) & 0.172 (0.172,0.173) & 0.172 (0.172,0.173) \\ 
  500 & 10 & 0.15 & 2 & 0.788 (0.787,0.789) & 0.174 (0.174,0.175) & 0.172 (0.172,0.173) \\ 
  500 & 20 & 0.15 & 2 & 0.788 (0.787,0.789) & 0.183 (0.182,0.184) & 0.172 (0.172,0.173) \\ 
  1000 & 5 & 0.15 & 2 & 0.788 (0.787,0.789) & 0.163 (0.162,0.163) & 0.162 (0.162,0.163) \\ 
  1000 & 10 & 0.15 & 2 & 0.788 (0.787,0.789) & 0.164 (0.163,0.164) & 0.162 (0.162,0.163) \\ 
  1000 & 20 & 0.15 & 2 & 0.786 (0.785,0.787) & 0.166 (0.166,0.167) & 0.162 (0.162,0.162) \\ 
  250 & 5 & 0.30 & 2 & 0.831 (0.83,0.832) & 0.339 (0.338,0.341) & 0.333 (0.332,0.334) \\ 
  250 & 10 & 0.30 & 2 & 0.831 (0.829,0.832) & 0.356 (0.353,0.358) & 0.332 (0.331,0.334) \\ 
  250 & 20 & 0.30 & 2 & 0.831 (0.829,0.832) & 0.404 (0.401,0.408) & 0.333 (0.332,0.335) \\ 
  500 & 5 & 0.30 & 2 & 0.83 (0.828,0.831) & 0.323 (0.322,0.324) & 0.32 (0.319,0.321) \\ 
  500 & 10 & 0.30 & 2 & 0.829 (0.828,0.83) & 0.329 (0.328,0.329) & 0.319 (0.318,0.32) \\ 
  500 & 20 & 0.30 & 2 & 0.83 (0.829,0.831) & 0.359 (0.355,0.362) & 0.32 (0.319,0.32) \\ 
  1000 & 5 & 0.30 & 2 & 0.829 (0.828,0.83) & 0.313 (0.312,0.313) & 0.312 (0.312,0.313) \\ 
  1000 & 10 & 0.30 & 2 & 0.829 (0.828,0.83) & 0.315 (0.315,0.316) & 0.312 (0.311,0.313) \\ 
  1000 & 20 & 0.30 & 2 & 0.829 (0.828,0.83) & 0.324 (0.323,0.325) & 0.312 (0.312,0.313) \\ 
   \hline
\end{tabular}
\caption{RMSEs and 95\% CIs for kerned regression estimator with scalar bandwidth, bandwidth matrix and oracle kernel regression estimator under a single index model (Index \# =1) and a two-index model (Index \# =2).}
\label{tab::sim_results}
\end{table}

\section{Data Analysis}
In this section we demonstrate the estimated predictive performance of kernel regression using a bandwidth matrix versus kernel regression using a scalar bandwidth
with the goal of understanding whether the asymptotic results presented in this article are indicative of finite sample performance on
commonly explored data sets.  For reference, we also compare the predictive performance of these kernel regression estimators to that of
a linear regression estimator with no interactions and no nonlinear terms.  Large differences
in estimated prediction accuracy between the nonparametric kernel regression methods and the linear model may provide an indication of 
whether the linear model is failing to account for any interactions or nonlinearities although this comparison does not constitute a formal test.  

To estimate the predictive accuracy of each method we used testing sets of approximate size $n\times0.25$.  Observations were split
into 4 groups.  This yielded 4 splits of the data into a set of size $n\times0.75$ for training and a set of size $n\times0.25$ for
estimating the RMSE via a testing set.  A final estimate of the RMSE was obtained by averaging the 4 estimates of the RMSE associated
with each split.  Within each training set 10-fold cross-validation was used to select the bandwidth matrix and scalar bandwidth for the two regression 
estimators.           

We tested these methods on 3 data sets: the Boston housing data set obtained via the R package MASS and
the concrete compressive strength, and the auto-mpg data sets, with the latter two being available from the UC Irvine Machine Learning 
Repository (https://archive.ics.uci.edu/ml).  Continuous covariate were centered and scaled to have variance equal to 1.
Discrete variables were coded using the usual dummy coding, thus, a discrete valued variable with $v$ levels contributes
$v-1$ covariates.  The results are presented in Table \ref{tab::data_RMSEs}.  

The kernel regression estimator indexed by a bandwidth matrix yielded a smaller estimated RMSE than both the kernel regression
estimator indexed by a scalar and the linear regression estimator.  The kernel regression estimator indexed by a scalar bandwidth
had the highest RMSE for all 3 data sets.  10-fold cross-validation led to over-smoothed estimates of the regression function. 
The improvement in performance from using a bandwidth matrix over a scalar bandwidth suggests that there is lower-dimensional
structure which the bandwidth matrix is able to take advantage of.  The kernel regression estimator indexed by a bandwidth matrix
may be performing better than linear regression because there is nonlinear structure or there are interactions that the linear 
regression estimator fail to account for.  
\begin{table}
\centering
\begin{tabular}{cccccc}
\hline
Data Set & $n$ & $p$ & Bandwidth Matrix & Scalar Bandwidth & Linear Regression\\ 
\hline
Boston housing & 506 & 13 & 4.05 & 9.04 & 4.87\\
concrete & 1030 & 8 & 6.29 & 16.67 & 10.47\\ 
auto-mpg & 398 & 11 & 2.69 & 7.72 & 3.32\\ 
\hline
\end{tabular}
\caption{Estimated RMSEs for three commonly used data sets}
\label{tab::data_RMSEs}
\end{table}

\section{Discussion}
To the best of our knowledge, the performance of $K$-fold cross-validation for Nadaraya-Watson kernel regression estimator, indexed by a bandwidth matrix
has not previously been investigated.  We proved a finite sample oracle inequality for this estimator and demonstrated that lower-dimensional rates of convergence
can be achieved if the true regression model is single or multi-index.
The performance of cross-validation for the Nadaraya-Watson estimator has been investigated extensively in the literature
\citep{wong1983consistency, hall1984asymptotic, hardle1985optimal, hardle1987nonparametric, walk2002cross, gyorfi2006distribution}.
The results on cross-validation and kernel regression presented in \cite{hall1984asymptotic, hardle1985optimal, hardle1987nonparametric}, 
and \citet{walk2002cross} concern leave-one-out cross-validation and all consider the case when $H^{*}=h^{*}I_{p}$, where $h^{*}=1/h$.   
The results of these papers are distinguished by the optimality criterion they consider, assumptions about the distribution of the data, properties of the
kernel, and assumptions about the range over which the bandwidth is varied, which are detailed below. 

Our main result provides an upper-bound on the rate of convergence of the marginal excess risk.  Compared to the 
results in \cite{hall1984asymptotic, hardle1985optimal, hardle1987nonparametric}, and \cite{walk2002cross} we make the fairly strong assumption that 
both $Y$ and $X$ are almost surely bounded, although we do not assume that $p_{0,X}(x)$ is bounded as in 
\cite{hardle1985optimal} and \cite{hardle1987nonparametric}.  We also do not assume that $p_{0,X}(x)$ satisfies
a smoothness condition such as H\"older continuity, as in \cite{hardle1987nonparametric}.  \cite{hardle1987nonparametric} assume that 
$p_{0,X}(x)$ is known.  \cite{hall1984asymptotic, hardle1985optimal, hardle1987nonparametric} and \cite{walk2002cross}
assume the kernel has bounded support.   In contrast, we assume the use of the Gaussian kernel, a kernel with unbounded support.  
Unlike \cite{hall1984asymptotic} and \cite{hardle1985optimal}, we do not require that the bandwidth parameters converge to 0.  
\cite{walk2002cross} does not require that the scalar bandwidth converge 0, however, the result does assume the bandwidth parameter
varies over a discrete grid.  Our result allows the bandwidth matrix to range over a bounded subset of the space of positive semidefinite matrices and 
these subsets grow at an appropriately fast rate.       

The results presented in \cite{dudoit2005asymptotics} and \cite{gyorfi2006distribution} apply to a wide variety of estimation procedures.
The results of  \cite{hall1984asymptotic, hardle1985optimal, hardle1987nonparametric}, and  \cite{walk2002cross} are specific to kernel regression.
Our extension, as in \cite{dudoit2005asymptotics} and \cite{gyorfi2006distribution}, can be applied to regression estimators other than 
kernel regression.  However this generality has theoretical drawbacks.  Our proof requires that the number of folds, $K$, be constant
or grow slowly as a fraction of the sample size.  In particular, this excludes leave-one-out cross-validation.    
The upper-bound from Theorem \ref{thm::kern_main} may be loose and the rate of convergence may be substantially smaller.  
In particular, the result does not imply that leave-one-out cross-validation is inconsistent for the Nadaraya-Watson estimator, 
indexed by a matrix bandwidth.         

While the focus of the article is on $K$-fold cross-validation.  Our results easily generalize to other cross-validation
schemes.  For example, a cross-validation scheme that may lead to more stable model selection
could be obtained by specifying more than just $K$ splits in which approximately $n\pi$ observations are
used for validation.  Repeated $K$-fold cross-validation (\cite{burman1989comparative}),
wherein $K$-fold cross-validation is carried out repeatedly by using a different partition of the observations
each time, is such a scheme.  Our results also hold for repeated $K$-fold cross-validation.

It is of interest to investigate other settings in which Theorem \ref{thm::main_extn} can be applied.
For example, consider the ridge regression estimator defined as $\psi_{\lambda}(X|P_{n})=
 \text{argmin}_{\beta}\{\sum_{i=1}^{n}(Y_{i} - X'_{i}\beta)^{2} + \lambda \sum_{j=1}^{p}\beta_{j}^{2}\}$ (\cite{hoerl1970ridge}).
An alternate estimator is defined by penalizing each covariate to a different degree:
$\psi_{\lambda}(X|P_{n})=\text{argmin}_{\beta}\{\sum_{i=1}^{n}(Y_{i} - X'_{i}\beta)^{2} + \sum_{j=1}^{p}\lambda_{j}\beta_{j}^{2}\}$.
For this estimator, covariates with no effect or a weak effect on $Y$ should be penalized more heavily.
For both estimators, the $K$-fold cross-validation criterion may also be minimized via a gradient descent algorithm.
It may be possible to apply Theorem \ref{thm::main_extn} to both estimators, allowing for some modification of these 
ridge regression estimators to ensure that the estimators are bounded.

\section{Proofs} 
\subsection{Rate of Convergence of a Kernel Regression Estimator with Unbounded Support}
In this section, we present a result extending Theorem 5.2 of \cite{gyorfi2006distribution} on the rate of convergence of the kernel regression estimator
to handle the case where a Gaussian kernel is used.  This result is used in the proof of Theorem \ref{thm::kern_main}.

In the following lemma, we restrict our attention to the special case where a single bandwidth parameter, $h > 0$, is used.  Therefore,
instead of $k_{i,H}(x)= \exp(-\maha)$, we have $k_{i,h}(x)=\exp(-h(X_{i} - x)'(X_{i} - x))=\exp(-h||X_{i} - x||^{2}_{2})$, where 
$||X_{i} - x||^{2}$ is the squared Euclidean distance between $X_{i}$ and $x$.  With $h$ fixed, we denote the resulting kernel estimator
as $\psi_{n}$.  

\cite{gyorfi2006distribution} consider a class of kernel regression estimators of the form
\begin{equation}
\psi_{n}(x) = \frac{\sum_{i=1}^{n}K((X_{i}- x)h)Y_{i}}{\sum_{i=1}^{n}K((X_{i}- x)h)},
\end{equation}
where $K:\mathcal{R}^{d} \rightarrow [0, \infty)$ is the kernel function.
Theorem 5.2 of \cite{gyorfi2006distribution} provides an upper-bound on $E_{O}[(\psi_{n}(X) - \psi(X))^{2}]$ when $K(x)$ is a type of 
kernel with bounded support called a ``boxed" kernel.  Letting $S_{x,r}$ be a ball of radius $r$, centered at $x$, $K(x)$ is a boxed
kernel if there exists $0 < r < r'$ and $b > 0$ such that $I_{\{x \in S_{0,r'}\}} \geq K(x) \geq bI_{\{x \in S_{0,r}\}}$.    In the case of a 
Gaussian kernel, there does exist $b$ and $r>0$ such that $K(x) \geq bI_{\{x \in S_{0,r}\}}$.  However, the Gaussian kernel has unbounded
support, therefore, there exists no $r'$ such that $I_{\{x \in S_{0,r'}\}} \geq K(x)$.  The proof of Lemma \ref{lemma::kern_lemma} is given 
in the supplementary materials \cite{connSupplement}.

\begin{lemma}
\label{lemma::kern_lemma}
Assume $X$ has support such that there exists $B > 0$ such that $P(||X || < B) = 1$.
Assume the true regression function, $\psi(x)$ is Lipshitz continuous over the support of $X$, with Lipshitz constant $R$,
and that $\text{Var}(Y|X=x) \leq \sigma^{2}$ over the support of $X$.
Under these assumptions we have
\begin{equation}
E[(\psi_{n}(X) - \psi(X))^{2}] \leq  \frac{R^{2}\log(n)}{h} + \frac{2R^{2}B^{2}\tilde{c}h^{p/2}}{nb} + \frac{2\sigma^{2}\tilde{c}h^{p/2}}{nb} \label{inq::kernel_bound1},
\end{equation}
where $\tilde{c}$ depends on $p$ and $B$.  Furthermore, if we let the bandwidth $h$ increase to infinity as
\begin{equation*}
h^{*}_{n} = \bigg(\frac{A_{1}\log(n)n}{A_{2}\frac{p}{2}}\bigg)^{\frac{2}{p + 2}}
\end{equation*}
where $A_{1}=R^{2}$ and $A_{2}=\frac{2\tilde{c}}{b}(R^{2}B^{2} + \sigma^{2})$,
then the above bound yields
\begin{equation*}
E[(\psi_{n}(X) - \psi(X))^{2}] \leq A\log(n)^{\frac{p}{p+2}}n^{\frac{-2}{p + 2}},
\end{equation*}
where $A = A_{1}^{\frac{p}{p+2}}A_{2}^{\frac{2}{p+2}}\big((p/2)^{\frac{2}{p+2}} + (p/2)^{\frac{p}{p+2}}\big).$
\end{lemma}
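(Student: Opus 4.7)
The plan is to establish the bound in the lemma via the standard bias-variance decomposition of the Nadaraya-Watson estimator, combined with a truncation argument to accommodate the Gaussian kernel's unbounded support. Conditionally on $X_1,\ldots,X_n$, I write $E[(\psi_n(x)-\psi(x))^2\mid X_1,\ldots,X_n]$ as the sum of the conditional variance $\text{Var}(\psi_n(x)\mid X_1,\ldots,X_n)$ and the squared conditional bias $(E[\psi_n(x)\mid X_1,\ldots,X_n] - \psi(x))^2$, then bound each piece separately before taking expectation over $X_1,\ldots,X_n$ and integrating against $P_{0,X}$.

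For the variance, the Gaussian kernel satisfies $0\leq k_{i,h}(x)\leq 1$, so $\text{Var}(\psi_n(x)\mid X_1,\ldots,X_n)\leq \sigma^2\sum_i k_{i,h}(x)^2/(\sum_i k_{i,h}(x))^2\leq \sigma^2/\sum_i k_{i,h}(x)$. Using the lower bound $K(u)\geq b\,\mathbf{1}\{\|u\|\leq r\}$ available for the Gaussian, $\sum_i k_{i,h}(x)\geq b N_n(x)$ where $N_n(x)=\#\{i:\|X_i-x\|\leq r/\sqrt{h}\}$. To control $E\int N_n(x)^{-1}\,dP_{0,X}(x)$ I would cover the ball $\{\|x\|\leq B\}$ by balls of radius $r/(2\sqrt{h})$; a volume computation requires $O((B\sqrt{h}/r)^p) = O(h^{p/2})$ balls, and the constant depending on $p$ and $B$ becomes the $\tilde c$ of the lemma. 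On each cell $N_n(x)$ dominates the binomial count of $X_i$'s in that cell, so a standard Gy\"orfi-type bound on the expected reciprocal of a binomial produces the $2\sigma^2\tilde c h^{p/2}/(nb)$ term.

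For the bias, Jensen's inequality together with Lipschitz continuity of $\psi$ gives $(E[\psi_n(x)\mid X_1,\ldots,X_n] - \psi(x))^2 \leq \sum_i k_{i,h}(x) R^2\|X_i-x\|^2/\sum_i k_{i,h}(x)$. The crucial step is to split this normalized sum at the threshold $\rho=\sqrt{\log(n)/h}$: on $\{\|X_i-x\|\leq\rho\}$ each summand is bounded by $R^2\log(n)/h$, yielding the leading $R^2\log(n)/h$ term directly. On $\{\|X_i-x\|>\rho\}$ the Gaussian decay gives $k_{i,h}(x)\leq \exp(-h\rho^2) = 1/n$, so $\sum_{\|X_i-x\|>\rho} k_{i,h}(x)\|X_i-x\|^2\leq n\cdot(1/n)\cdot 4B^2$ using $\|X_i\|,\|x\|\leq B$; dividing by the same lower bound $b N_n(x)$ and integrating as in the variance step yields the $2R^2B^2\tilde c h^{p/2}/(nb)$ term.

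The second statement then follows by optimizing in $h$: balancing $A_1\log(n)/h$ against $A_2 h^{p/2}/n$ produces the prescribed $h_n^*\asymp (\log(n)n)^{2/(p+2)}$ and the stated rate $A\log(n)^{p/(p+2)}n^{-2/(p+2)}$. The principal obstacle is the bias estimate: the boxed-kernel argument of \cite{gyorfi2006distribution} exploits a hard truncation of the kernel so that far-away observations can be dropped outright, and no such truncation exists for the Gaussian. The remedy is the soft truncation at $\rho=\sqrt{\log(n)/h}$, chosen precisely so that every individual kernel weight outside $\rho$ is at most $1/n$; it is this $\log n$ in the truncation radius that ultimately propagates into the final rate.
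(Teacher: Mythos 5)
Your proposal is correct and matches the approach of the paper's proof (given in the supplement): the bias--variance decomposition, the covering of $\{\|x\|\leq B\}$ by $O(h^{p/2})$ cells combined with the expected-reciprocal-of-a-binomial bound to produce the $\tilde c h^{p/2}/(nb)$ terms, and, crucially, the soft truncation of the Gaussian tail at radius $\sqrt{\log(n)/h}$ so that each far-field weight is at most $1/n$ --- which is exactly what generates the $R^{2}\log(n)/h$ bias term and distinguishes the argument from the boxed-kernel case of Theorem 5.2 in Gy\"orfi et al. The only loose ends are routine bookkeeping (handling the event $N_n(x)=0$ and absorbing the factor $4B^2$ from $\|X_i-x\|\leq 2B$ into $\tilde c$), neither of which affects the result.
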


\subsection{Additional Definitions and Results Regarding Brackets}
We briefly introduce the concept of brackets and bracketing numbers.  Let $\mathcal{F}$ be a collection of functions $f(O)$.
An $L_{ 2}$ $\epsilon$-bracket determined by a pair of functions, $l$ and $u$, such that $l(O) \leq u(O)$ and $E[|l(O) - u(O)|^{2}] \leq \epsilon^{2}$, is
defined as the set of functions $f(O) \in \mathcal{F}$ such that $l(O) \leq f(O) \leq u(O)$.  Such a bracket will be denoted as $[l, u]$.  A minimal $\epsilon$-covering
of $\mathcal{F}$ with brackets is a collection of brackets such that each element of $\mathcal{F}$ is in
at least one bracket and there exists no collection of $\epsilon$ brackets of smaller cardinality.  The minimum number of brackets
required to cover $\mathcal{F}$ is denoted by $N_{[]}(\epsilon, \mathcal{F})$. 

By Jensen's inequality, we have
$\epsilon^{2} \geq E[|u - v|^{2}] \geq (E[|u - l|])^{2}$ which implies that $E[|u - l|] \leq \epsilon$.

Let $f_{1},f_{2} \in [l, u]$.  We show that $f_{1}$ and $f_{2}$ have variances that are close to one another for small values of $\epsilon$.
This result is used in Theorem \ref{thm::main_extn}.
\begin{lemma}
\label{lemma::var_bound}
Let $f_{1}$ and $f_{2}$ be elements of an $L_{2}$ $\epsilon$-bracket, $[l, u]$.  Then 
\begin{equation}
|\sigma^{2}_{f_{1}} - \sigma^{2}_{f_{2}}|  \leq \epsilon (\sigma_{f_{1}} + \sigma_{f_{2}})
 \end{equation}
\end{lemma}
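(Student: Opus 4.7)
The strategy is to factor the difference of squared standard deviations and interpret $\sigma_f$ as an $L_2$ norm so that the reverse triangle inequality can be applied.

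First I would write
\[
|\sigma^{2}_{f_{1}} - \sigma^{2}_{f_{2}}| = |\sigma_{f_{1}} - \sigma_{f_{2}}| \cdot (\sigma_{f_{1}} + \sigma_{f_{2}}),
\]
so that the claim reduces to showing $|\sigma_{f_{1}} - \sigma_{f_{2}}| \leq \epsilon$. Next, I would observe that $\sigma_{f_{i}} = \|f_{i} - E f_{i}\|_{L_{2}(P_{0})}$, i.e., $\sigma_{f_{i}}$ is the $L_{2}$ norm of the centered function. By the reverse triangle inequality for the $L_{2}$ norm,
\[
|\sigma_{f_{1}} - \sigma_{f_{2}}| = \bigl| \|f_{1} - E f_{1}\|_{2} - \|f_{2} - E f_{2}\|_{2} \bigr| \leq \|(f_{1} - f_{2}) - E(f_{1} - f_{2})\|_{2}.
\]
The right hand side is $\sqrt{\operatorname{Var}(f_{1} - f_{2})} \leq \sqrt{E[(f_{1} - f_{2})^{2}]}$.

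The remaining step is to bound $E[(f_{1} - f_{2})^{2}]$ using the bracketing hypothesis. Since $l \leq f_{i} \leq u$ for $i=1,2$, we have $|f_{1} - f_{2}| \leq u - l$ pointwise, hence $E[(f_{1} - f_{2})^{2}] \leq E[(u - l)^{2}] \leq \epsilon^{2}$ by definition of an $L_{2}$ $\epsilon$-bracket. Combining the displays gives $|\sigma_{f_{1}} - \sigma_{f_{2}}| \leq \epsilon$, and then multiplying by $\sigma_{f_{1}} + \sigma_{f_{2}}$ yields the claim. There is no real obstacle here; the only subtle step is recognizing that the reverse triangle inequality should be applied to the centered functions rather than to $f_{1}$ and $f_{2}$ directly, which automatically produces a centered difference and makes the bracketing bound applicable.
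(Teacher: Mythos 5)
Your proof is correct and follows essentially the same route as the paper: both reduce the claim to $|\sigma_{f_1}-\sigma_{f_2}|\leq\epsilon$ via the reverse triangle inequality applied to the $L_2$ norms of the centered functions, bound the centered difference by $\sqrt{E[(f_1-f_2)^2]}$, and then invoke the bracketing hypothesis. If anything, you make explicit the step $|f_1-f_2|\leq u-l$ that the paper leaves implicit, so nothing further is needed.
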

\begin{proof}
\begin{flalign*}
&E\bigg\{ \big[(f_{1}(O) - E[f_{1}(O)]) - (f_{2}(O) - E[f_{2}(O)])\big]^{2}\bigg\}&\\
&=E\bigg\{\big[f_{1}(O) - f_{2}(O) - E[f_{1}(O)- f_{2}(O)] \big]^{2}\bigg\}&\\
&\leq E\bigg\{\big[(f_{1}(O) - f_{2}(O)) - (E[f_{1}(O)- f_{2}(O)]) \big]^{2}\bigg\} + (E[f_{1}(O) - f_{2}(O)])^{2}&\\
&= E[(f_{1}(O) - f_{2}(O))^{2}] \leq \epsilon^{2}.
\end{flalign*}

This implies that 
\begin{equation*}
\sqrt{E\bigg\{ \big[(f_{1}(O) - E[f_{1}(O)]) - (f_{2}(O) - E[f_{2}(O)])\big]^{2}\bigg\}} \leq \epsilon.
\end{equation*}
By the reverse triangle inequality, we then have 
\begin{flalign*}
&\bigg| \sqrt{E[(f_{1}(O) - E[f_{1}(O)])^{2}]} - \sqrt{E[(f_{2}(O) - E[f_{2}(O)])^{2}]} \bigg|&\\
&= \bigg| \sigma_{f_{1}} - \sigma_{f_{2}} \bigg| \leq \epsilon,
\end{flalign*}
where $\sigma^{2}_{f_{i}}$ is the variance of $f_{i} (i = 1, 2)$. 

Then $|\sigma^{2}_{f_{1}}$ - $\sigma^{2}_{f_{2}}| = |(\sigma_{f_{1}} - \sigma_{f_{2}})(\sigma_{f_{1}} + \sigma_{f_{2}})| \leq \epsilon (\sigma_{f_{1}} + \sigma_{f_{2}})$.
\end{proof}

\section{Appendix B: Proofs for Section 2}

\subsection{Proof of Theorem \ref{thm::main_extn} }
\begin{proof}
To simplify notation, for this finite sample result, we suppress the dependence of $\hat{k}_{n}$ and $\tilde{k}_{n}$ on the sample size, thus,
let $\hat{k}_{n}=\hat{k}$ and $\tilde{k}_{n}=\tilde{k}$.
We begin with the same decomposition as  \cite{dudoit2005asymptotics}.  
\begin{flalign*}
0 &\leq \tilde{\theta}^{CV}_{n(1-\pi)}(\hat{k}) - \theta_{opt} \\
&= E_{S_{n}}\int L(y, \psi_{\hat{k}}(x|P^{0}_{n,S_{n}})) - L(y, \psi(x))dP_{0}(x, y)\\
&\-\hspace{.5cm}-  (1 + \delta)E_{S_{n}}\int L(y, \psi_{\hat{k}}(x|P^{0}_{n,S_{n}})) - L(y, \psi(x))dP^{1}_{n,S_{n}}(x,y)\\
&\-\hspace{.5cm}+ (1 + \delta)E_{S_{n}}\int L(y, \psi_{\hat{k}}(x|P^{0}_{n,S_{n}})) - L(y, \psi(x))dP^{1}_{n,S_{n}}(x,y)\\
&\leq  E_{S_{n}}\int L(y, \psi_{\hat{k}}(x|P^{0}_{n,S_{n}})) - L(y, \psi(x))dP_{0}(x, y)\\
&\-\hspace{.5cm}-  (1 + \delta)E_{S_{n}}\int L(y, \psi_{\hat{k}}(x|P^{0}_{n,S_{n}})) - L(y, \psi(x))dP^{1}_{n,S_{n}}(x,y)\\
&\-\hspace{.5cm}+ (1 + \delta)E_{S_{n}}\int L(y, \psi_{\tilde{k}}(x|P^{0}_{n,S_{n}})) - L(y, \psi(x))dP^{1}_{n,S_{n}}(x,y)\\
&\-\hspace{.5cm}+(1 + \delta)\gamma\\
&=E_{S_{n}}\int L(y, \psi_{\hat{k}}(x|P^{0}_{n,S_{n}})) - L(y, \psi(x))dP_{0}(x, y)\\
&\-\hspace{.5cm}-  (1 + \delta)E_{S_{n}}\int L(y, \psi_{\hat{k}}(x|P^{0}_{n,S_{n}})) - L(y, \psi(x))dP^{1}_{n,S_{n}}(x,y)\\
&\-\hspace{.5cm}+ (1 + \delta)E_{S_{n}}\int L(y, \psi_{\tilde{k}}(x|P^{0}_{n,S_{n}})) - L(y, \psi(x))dP^{1}_{n, S_{n}}(x, y)\\
&\-\hspace{.5cm} - (1 + 2\delta)E_{S_{n}}\int L(y, \psi_{\tilde{k}}(x|P^{0}_{n,S_{n}})) - L(y, \psi(x))dP_{0}(x, y)\\
&\-\hspace{.5cm}+ (1 + 2\delta)E_{S_{n}}\int L(y, \psi_{\tilde{k}}(x|P^{0}_{n,S_{n}})) - L(y, \psi(x))dP_{0}(x, y)\\
&\-\hspace{.5cm}+(1 + \delta)\gamma
\end{flalign*}
As in \cite{dudoit2005asymptotics}, denote the sum of the first and second terms above by $R_{n\hat{k}}$.  Denote the sum of the third
and fourth term as $T_{n,\tilde{k}}$.  The fifth term is the cross-validation benchmark.  Therefore, we have
\begin{equation}
0 \leq \tilde{\theta}^{CV}_{n(1-\pi)}(\hat{k}) - \theta_{opt} \leq (1 + 2\delta)\{\tilde{\theta}^{CV}_{n(1-\pi)}(\tilde{k}) - \theta_{opt}\} + R_{n\hat{k}} + T_{n,\tilde{k}} + (1 + \delta)\gamma.
\end{equation}
The objective is then to find upper bounds for $ER_{n\hat{k}}$ and $ET_{n,\tilde{k}}$.
We will show that the same bound applies for both $ER_{n\hat{k}}$ and $ET_{n,\tilde{k}}$. 
At present, we find an upper bound for $R_{n\hat{k}}$.

Fixing our attention on a particular split into training and validation sets, say the $j$th split $\splitj$, let $Z_{ki}^{S_{n}^{(j)}} = Z_{ki}^{\splitj}(Y_{i},X_{i})= L(Y_{i}, \psi_{k}(X_{i}|P_{n,\splitj}^{0})) - L(Y_{i}, \psi(X_{i}))$, where $i$ is such that $S^{(j)}_{ni}=1$.
Let $Z_{k}^{\splitj} = Z_{k}^{\splitj}(Y, X) = L(Y, \psi_{k}(X|P_{n,\splitj}^{0})) - L(Y, \psi(X))$ have the same distribution as $Z^{\splitj}_{ki}$. 
Note that $|Z^{\splitj}_{ki}|\leq M_{1}$ $a.s$.  We also have $\int Z_{k}^{S^{(j)}_{n}}(x, y) dP_{0}(x, y) = E[Z_{k}^{\splitj}|P^{0}_{n, S_{n}^{(j)}}] - \theta_{opt} \geq 0$. 

For any $k$, let 
\begin{equation*}
R_{nk} = \frac{1}{K}\sum_{j=1}^{K}R_{nk}(S^{(j)}_{n}),
\end{equation*}
where
\begin{equation*}
R_{nk}(S^{(j)}_{n})=\int Z_{k}^{S^{(j)}_{n}}(x, y) dP_{0}(x, y) - (1 + \delta)\big\{\frac{1}{n\pi}\sum_{\{i : S_{ni}^{(j)}=1\}}Z_{ki}^{S^{(j)}_{n}}(X_{i}, Y_{i})\big\}.
\end{equation*}
After adding and subtracting $\delta\int Z_{k}^{S^{(j)}_{n}}(x, y) dP_{0}(x, y)$,
we have 
\begin{equation}
R_{nk}(S^{(j)}_{n})=(1 + \delta)\bigg\{\frac{1}{n\pi}\sum_{\{i : S^{(j)}_{ni}=1\}}(\int Z_{k}^{S^{(j)}_{n}}(x, y) dP_{0}(x, y) - Z_{ki})\bigg\} - \delta\int Z_{k}^{S^{(j)}_{n}}(x, y) dP_{0}(x, y).\label{eq::rnkdef}
\end{equation}

Therefore, 
\begin{flalign}
\nonumber & P\bigg(R_{n\hat{k}}(S_{n}^{(j)}) > s | P^{0}_{n,S^{(j)}_{n}}\bigg) &\\
\nonumber &= P\bigg((1 + \delta)\big\{\frac{1}{n\pi}\sum_{\{i : S^{(j)}_{ni}=1\}} E[Z_{\hat{k}i}^{S^{(j)}_{n}}|P^{0}_{n,S^{(j)}_{n}}] - Z_{\hat{k}i}^{S^{(j)}_{n}}\big\} - \delta E[Z_{\hat{k}i}^{S^{(j)}_{n}}|P^{0}_{n,S^{(j)}_{n}}]> s |P^{0}_{n,\splitj}\bigg)&\\
\nonumber &\leq P\bigg( \sup_{k\in \Xi_{n}}\bigg\{ (1 + \delta)\big\{\frac{1}{n\pi}\sum_{\{i : S^{(j)}_{ni}=1\}} E[Z_{ki}^{S^{(j)}_{n}}|P^{0}_{n,S^{(j)}_{n}}] - Z_{ki}^{S^{(j)}_{n}}\big\} - \delta E[Z_{ki}^{S^{(j)}_{n}}|P^{0}_{n,S^{(j)}_{n}}] \bigg\} > s|P^{0}_{n,S^{(j)}_{n}}\bigg)&\\
\nonumber &\leq P\bigg( \sup_{k\in \Xi_{n}}\bigg\{ (1 + \delta)\big\{\frac{1}{n\pi}\sum_{\{i : S^{(j)}_{ni}=1\}} E[Z_{ki}^{S^{(j)}_{n}}|P^{0}_{n,S^{(j)}_{n}}] - Z_{ki}^{S^{(j)}_{n}}\big\} - \delta \frac{Var(Z_{ki}^{S^{(j)}_{n}}|P^{0}_{n,S^{(j)}_{n}})}{M_{2}} \bigg\} > s|P^{0}_{n,S^{(j)}_{n}}\bigg),    
\end{flalign}
where in the last inequality we have used the inequality
\begin{equation*}
Var(Z_{ki}^{S^{(j)}_{n}}|P^{0}_{n,S^{(j)}_{n}})/M_{2} \leq E[Z_{ki}^{S^{(j)}_{n}}|P^{0}_{n,S^{(j)}_{n}}]
\end{equation*}
(see Lemma 3 of \cite{dudoit2005asymptotics}).
We will find an exponential bound for the previous probability via bracketing numbers.  

Let $\mathcal{F}_{P^{0}_{n,S_{n}^{(j)}}}=\{Z_{k}^{S^{(j)}_{n}}=Z_{k}^{S^{(j)}_{n}}(X,Y): k\in \Xi_{n} \}$.  Set $\epsilon = s/(4(1 + 2\delta))$.
Let $[l_{v}, u_{v}]$ $(v=1,\ldots,N(\epsilon))$ be a minimal $L_{2}$ $\epsilon$-bracketing of $\mathcal{F}_{P^{0}_{n,S_{n}^{(j)}}}$ where 
$N(\epsilon)=N_{[]}(\epsilon, \mathcal{F}_{P^{0}_{n,\splitj}})$.  By $A1$ we may assume without loss of generality that $|l_{v}|\leq M_{1}$
and that $|u_{v}|\leq M_{1}$.

Note that $N_{[]}(\epsilon, \mathcal{F}_{P^{0}_{n, \splitj}})$ depends on the training set defined by $\splitj$.
Given the training set used in split $\splitj$, $N(\epsilon)=N_{[]}(\epsilon, \mathcal{F}_{P^{0}_{n,\splitj}})$ is a fixed number rather than a random variable.  
We will find a bound for $N(\epsilon, \mathcal{F}_{P^{0}_{n,\splitj}})$ that is independent of the training set.
Choose a representative, $f_{v} \in [l_{v}, u_{v}]$ from each bracket and let $f_{vi}=f_{v}(O_{i})$.  
Similarly, let $l_{vi}=l_{v}(O_{i})$ and $u_{vi}=u_{v}(O_{i})$.
If $Z_{k}^{S_{n}^{(j)}} \in \mathcal{F}_{P^{0}_{n,S_{n}^{(j)}}}$ such that $Z_{k}^{S_{n}^{(j)}} \in [l_{v}, u_{v}]$ we have
\begin{flalign}
\nonumber & \frac{1}{n\pi}\sum_{\{i : S_{ni}^{(j)}=1\}}(Z_{ki}^{\splitj} - E[Z_{ki}^{\splitj}|P^{0}_{n,S_{n}^{(j)}}])&\\
&\leq  \frac{1}{n\pi}|\sum_{\{i : S_{ni}^{(j)}=1\}}f_{vi} - E[f_{vi}|P^{0}_{n,S_{n}^{(j)}}]| +&\\ 
&\-\hspace{.5cm}\frac{1}{n\pi}|\sum_{\{i : S_{ni}^{(j)}=1\}}(Z_{ki}^{\splitj} - f_{vi}) - E[(Z_{ki}^{\splitj} - f_{vi})|P^{0}_{n,S_{n}^{(j)}}]|. \label{fixk:line:ineq}
\end{flalign}

Consider the term in (\ref{fixk:line:ineq}).  By the triangle inequality and the fact that
$[l_{v}, u_{v}]$ is an $\epsilon$ bracket we have
\begin{flalign*}
&\frac{1}{n\pi}|\sum_{\{i : S_{ni}^{(j)}=1\}}(Z_{ki}^{\splitj} - f_{vi}) - E[(Z_{ki}^{\splitj} - f_{vi})|P^{0}_{n,S_{n}^{(j)}}]| &\\
&\leq \frac{1}{n\pi}\sum_{\{i : S_{ni}^{(j)}=1\}}|Z_{ki}^{\splitj} - f_{vi}| + E[|Z_{ki}^{\splitj} - f_{vi}||P^{0}_{n,S_{n}^{(j)}}] \\
&\leq \frac{1}{n\pi}\sum_{\{i : S_{ni}^{(j)}=1\}}|u_{vi} - l_{vi}| + E[|u_{vi} - l_{vi}||P^{0}_{n,S_{n}^{(j)}}] \\
&= \frac{1}{n\pi}\sum_{\{i : S_{ni}^{(j)}=1\}}|u_{vi} - l_{vi}| - E[|u_{vi} - l_{vi}||P^{0}_{n,S_{n}^{(j)}}]| + 2E[|u_{vi} - l_{vi}||P^{0}_{n,S_{n}^{(j)}}]\\
&\leq \frac{1}{n\pi}(\sum_{\{i : S_{ni}^{(j)}=1\}}|u_{vi} - l_{vi}| - E[|u_{vi} - l_{vi}||P^{0}_{n,S_{n}^{(j)}}]|) + 2\epsilon.
\end{flalign*}

Replacing the term in (\ref{fixk:line:ineq}) by the final term in the above series of inequalities 
yields the inequality
\begin{flalign}
\nonumber &\frac{1}{n\pi}|\sum_{\{i : S_{ni}^{(j)}=1\}}(Z_{ki}^{\splitj} - E[Z_{ki}^{\splitj}|P^{0}_{n,S_{n}^{(j)}}])|\\
 &\leq  \frac{1}{n\pi}|\sum_{\{i : S_{ni}^{(j)}=1\}}f_{vi} - E[f_{vi}|P^{0}_{n,S_{n}^{(j)}}]| +& \label{eq::bracketinq1}\\ 
 &\-\hspace{.5cm} \frac{1}{n\pi}\sum_{\{i : S_{ni}^{(j)}=1\}}(|u_{vi} - l_{vi}| - E[|u_{vi} - l_{vi}||P^{0}_{n,S_{n}^{(j)}}]|) + 2\epsilon \label{eq::bracketinq2}.
\end{flalign} 

Using Lemma \ref{lemma::var_bound} and the fact that $\sqrt{Var(Z_{ki}^{\splitj}|P^{0}_{n,S_{n}^{(j)}})} \leq M_{2}$ 
and $\sqrt{Var(f_{v}|P^{0}_{n,S_{n}^{(j)}})} \leq M_{2}$ , we have 
\begin{equation}
Var(Z_{ki}^{\splitj}|P^{0}_{n,S_{n}^{(j)}}) \geq Var(f_{v}|P^{0}_{n,S_{n}^{(j)}}) - 2M_{2}\epsilon. \label{eq::var_inq}
\end{equation}

Combining (\ref{eq::bracketinq1}), (\ref{eq::bracketinq2}), and (\ref{eq::var_inq}), we have 
\begin{flalign*}
&(1 + \delta)\big\{\frac{1}{n\pi}\sum_{\{i : S^{(j)}_{ni}=1\}} E[Z_{ki}^{\splitj}|P^{0}_{n,S^{(j)}_{n}}] - Z_{ki}^{\splitj}\big\} - \delta \frac{Var(Z_{ki}^{S^{(j)}}|P^{0}_{n,S^{(j)}_{n}})}{M_{2}}\\
&\leq (1 + \delta)\frac{1}{n\pi}\sum_{\{i : S_{ni}^{(j)}=1\}}(|u_{vi} - l_{vi}| - E[|u_{vi} - l_{vi}||P^{0}_{n,S_{n}^{(j)}}]|)&\\
&+ (1 + \delta)\frac{1}{n\pi}|\sum_{\{i : S_{ni}^{(j)}=1\}}f_{vi} - E[f_{vi}|P^{0}_{n,S_{n}^{(j)}}]| - \delta \frac{Var(f_{v}|P^{0}_{n,S_{n}^{(j)}})}{M_{2}}&\\
&+ 2(1+\delta)\epsilon +  2\delta\epsilon. 
\end{flalign*}

Making use of the fact that $s - 2(1 + \delta)\epsilon - 2\delta\epsilon = s/2$, we have
\begin{flalign*}
&P\bigg( \sup_{k\in \Xi_{n}}\bigg\{ (1 + \delta)\big\{\frac{1}{n\pi}\sum_{\{i : S^{(j)}_{ni}=1\}} E[Z_{ki}^{\splitj}|P^{0}_{n,S^{(j)}_{n}}] - Z_{ki}^{\splitj}\big\} - \delta \frac{Var(Z_{ki}^{S^{(j)}}|P^{0}_{n,S^{(j)}_{n}})}{M_{2}} \bigg\} > s|P^{0}_{n,S^{(j)}_{n}}\bigg)&\\
&\leq P\bigg( \sup_{v\in \{1,\ldots, N(\epsilon)\}}\bigg\{ (1 + \delta)\frac{1}{n\pi}\sum_{\{i : S_{ni}^{(j)}=1\}}(|u_{vi} - l_{vi}| - E[|u_{vi} - l_{vi}||P^{0}_{n,S_{n}^{(j)}}]|)+&\\
&\-\hspace{3cm}(1 + \delta)\frac{1}{n\pi}|\sum_{\{i : S_{ni}^{(j)}=1\}}f_{vi} - E[f_{vi}|P^{0}_{n,S_{n}^{(j)}}]| - \delta \frac{Var(f_{v}|P^{0}_{n,S_{n}^{(j)}})}{M_{2}}\bigg\} > \frac{s}{2}|P^{0}_{n,S^{(j)}_{n}}\bigg)&\\
&\leq N(\epsilon)\max_{v\in \{1,\ldots, N(\epsilon)\}} P\bigg(\bigg\{ (1 + \delta)\frac{1}{n\pi}\sum_{\{i : S_{ni}^{(j)}=1\}}(|u_{vi} - l_{vi}| - E[|u_{vi} - l_{vi}||P^{0}_{n,S_{n}^{(j)}}]|)+&\\
&\-\hspace{3cm}(1 + \delta)\frac{1}{n\pi}|\sum_{\{i : S_{ni}^{(j)}=1\}}f_{vi} - E[f_{vi}|P^{0}_{n,S_{n}^{(j)}}]| - \delta \frac{Var(f_{v}|P^{0}_{n,S_{n}^{(j)}})}{M_{2}}\bigg\} > \frac{s}{2}|P^{0}_{n,S^{(j)}_{n}}\bigg)&\\
\end{flalign*}

Fixing a particular value of $v$, we use the general inequality that for any pair of random variables $A$ and $B$, $P(A + B > a + b) \leq P(A > a) + P(B > b)$:
\begin{flalign}
\nonumber&P\bigg(\bigg\{ (1 + \delta)\frac{1}{n\pi}\sum_{\{i : S_{ni}^{(j)}=1\}}(|u_{vi} - l_{vi}| - E[|u_{vi} - l_{vi}||P^{0}_{n,S_{n}^{(j)}}]|)+&\\
\nonumber&\-\hspace{1cm}(1 + \delta)\frac{1}{n\pi}|\sum_{\{i : S_{ni}^{(j)}=1\}}f_{vi} - E[f_{vi}|P^{0}_{n,S_{n}^{(j)}}]| - \delta \frac{Var(f_{v}|P^{0}_{n,S_{n}^{(j)}})}{M_{2}}\bigg\} > \frac{s}{2}|P^{0}_{n,S^{(j)}_{n}}\bigg)&\\
&\leq P\bigg((1 + \delta)\frac{1}{n\pi}\sum_{\{i : S_{ni}^{(j)}=1\}}(|u_{vi} - l_{vi}| - E[|u_{vi} - l_{vi}||P^{0}_{n,S_{n}^{(j)}}]|) > \frac{s}{4} | P^{0}_{n,S^{(j)}_{n}}\bigg)\label{eq::bern1}&\\
&+ P\bigg((1 + \delta)\frac{1}{n\pi}|\sum_{\{i : S_{ni}^{(j)}=1\}}f_{vi} - E[f_{vi}|P^{0}_{n,S_{n}^{(j)}}]| - \delta\frac{Var(f_{v}|P^{0}_{n,S_{n}^{(j)}})}{M_{2}}   > \frac{s}{4}|P^{0}_{n,S^{(j)}_{n}}\bigg)\label{eq::bern2}
\end{flalign}

First we obtain an upper-bound for (\ref{eq::bern2}).  For simplicity let $Var(f_{v}|P^{0}_{n,S_{n}^{(j)}})=\sigma_{f_{v}}^{2}$. By Bernstein's inequality we have,
\begin{flalign*}
&P\bigg((1 + \delta)\frac{1}{n\pi}|\sum_{\{i : S_{ni}^{(j)}=1\}}f_{vi} - E[f_{vi}|P^{0}_{n,S_{n}^{(j)}}]| - \delta\frac{\sigma_{f_{v}}^{2}}{M_{2}}   > \frac{s}{4}|P^{0}_{n,S^{(j)}_{n}}\bigg)&\\
&=P\bigg(\frac{1}{n\pi}|\sum_{\{i : S_{ni}^{(j)}=1\}}f_{vi} - E[f_{vi}|P^{0}_{n,S_{n}^{(j)}}]| > \frac{1}{1 + \delta}(\delta\frac{\sigma_{f_{v}}^{2}}{M_{2}} + \frac{s}{4})|P^{0}_{n,S^{(j)}_{n}}\bigg)&\\
&=2\exp\bigg(-\frac{n \pi}{2(1+\delta)^{2}}\frac{(\delta\frac{\sigma_{f_{v}}^{2}}{M_{2}} + \frac{s}{4})^{2}}{\sigma_{f_{v}}^{2} + \frac{M_{1}}{3(1 + \delta)}(\delta\frac{\sigma_{f_{v}}^{2}}{M_{2}} + \frac{s}{4})} \bigg)&
\end{flalign*}

We simplify the following expression in the exponent:  
\begin{flalign*}
&\frac{(\delta\frac{\sigma_{f_{v}}^{2}}{M_{2}} + \frac{s}{4})^{2}}{\sigma_{f_{v}}^{2} + \frac{M_{1}}{3(1 + \delta)}(\delta\frac{\sigma_{f_{v}}^{2}}{M_{2}} + \frac{s}{4}) } = 
\frac{(\delta\frac{\sigma_{f_{v}}^{2}}{M_{2}} + \frac{s}{4})}{\frac{\sigma_{f_{v}}^{2}}{(\delta\frac{\sigma_{f_{v}}^{2}}{M_{2}} + \frac{s}{4})} + \frac{M_{1}}{3(1 + \delta)}} 
\geq \frac{(\delta\frac{\sigma_{f_{v}}^{2}}{M_{2}} + \frac{s}{4})}{\frac{M_{2}}{\delta} + \frac{M_{1}}{3(1 + \delta)}} 
\geq \frac{1}{4}\frac{s}{\frac{M_{2}}{\delta} + \frac{M_{1}}{3}} &
\end{flalign*}

As $c_{2}(M, \delta) = (1 + \delta)^{2}8(\frac{M_{2}}{\delta} + \frac{M_{1}}{3})$, we therefore have
\begin{flalign}
\nonumber &P\bigg((1 + \delta)\frac{1}{n\pi}|\sum_{\{i : S_{ni}^{(j)}=1\}}f_{vi} - E[f_{vi}|P^{0}_{n,S_{n}^{(j)}}]| - \delta\frac{\sigma_{f_{v}}^{2}}{M_{2}}   > \frac{s}{4}|P^{0}_{n,S^{(j)}_{n}}\bigg)&\\
&\leq 2\exp\bigg(-\frac{n\pi}{c_{2}(M,\delta)} s \bigg) \label{inq:domterm}.
\end{flalign}

Now consider the term in (\ref{eq::bern1}).  Using Bernstein's inequality and the fact that $Var(|u_{vi} - l_{vi}|) \leq E|u_{vi} - l_{vi}|^{2} \leq E|u_{vi} - l_{vi}|M_{1} \leq M_{1}\epsilon$,
we have 
\begin{flalign}
\nonumber &P(\frac{1}{n\pi}(\sum_{\{i : S^{(j)}_{ni}=1\}}|u_{vi} - l_{vi}| - E[|u_{vi} - l_{vi}||P^{0}_{n,S^{(j)}_{n}},S^{(j)}_{n}])  > \frac{s}{4} | P^{0}_{n,\splitj},\splitj)\\
\nonumber &\leq 2\exp(-\frac{1}{2}\frac{n\pi(\frac{s}{4})^{2}}{M_{1}\epsilon+ M_{1}\frac{s}{4}\frac{1}{3}})\\ 
\nonumber &= 2\exp(-\frac{1}{2}\frac{n\pi(\frac{s}{4})^{2}}{M_{1}\frac{s}{4(1 + 2\delta)}+M_{1}\frac{s}{4}\frac{1}{3}})\\
\nonumber &= 2\exp(-\frac{1}{M_{1}8}\frac{n\pi}{\frac{1}{(1 + 2\delta)}+\frac{1}{3}}s). \label{inq:weakterm}\\ 
\end{flalign}

By $A4$, $\delta$ is taken small enough so that $1/(c_{2}(M,\delta)) \leq M_{1}8(\frac{1}{(1 + 2\delta)}+\frac{1}{3})$ and, in this case, the upper bound for (\ref{eq::bern2}) (Inequality \ref{inq:domterm}) is
larger than the upper bound for (\ref{eq::bern1}) (Inequality \ref{inq:weakterm}) and we have 
\begin{flalign*}
&P(R_{n,\hat{k}}(\splitj) > s|P^{0}_{n,\splitj})&\\
&\leq N(\epsilon)4\exp(-\frac{n\pi}{c_{2}(M,\delta)}s)\\
&\leq N(\frac{s}{4(1+2\delta)})4\exp(-\frac{n\pi}{c_{2}(M,\delta)}s).
\end{flalign*}

By assumption $\mathcal{F}_{P^{0}_{n,\splitj}}=\{Z_{k}: k\in \Xi_{n} \}$ are Lipshitz, with Lipshitz constant $C$
independent of $P^{0}_{n,\splitj}$.  Therefore, by Example 19.7 in \cite{van2000asymptotic} and Example 27.1 of \cite{shalev2014understanding},
\begin{flalign*}
N(\frac{s}{4(1+2\delta)}) \leq \Big(\frac{4\sqrt{d}C^{2}4(1+2\delta)diam(\Xi_{n})}{s}\Big)^{d}. 
\end{flalign*}

Thus, 
\begin{flalign*}
P(R_{n,\hat{k}}(\splitj) > s|P^{0}_{n,\splitj}) \leq 4\Big(\frac{4\sqrt{d}C^{2}4(1+ 2\delta)\text{diam}(\Xi_{n})}{s}\Big)^{d}\exp(-\frac{n\pi}{c_{2}(M,\delta)}s)
\end{flalign*}
and this implies that 
\begin{flalign*}
P(R_{n,\hat{k}}(\splitj) > s) \leq 4\Big(\frac{4\sqrt{d}C^{2}4(1+ 2\delta)\text{diam}(\Xi_{n})}{s}\Big)^{d}\exp(-\frac{n\pi}{c_{2}(M,\delta)}s)
\end{flalign*}

Note that $ER_{n, \hat{k}}= ER_{n, \hat{k}}(\splitj)$.
In general, for any random variable $Z$, $EZ \leq EI(Z > 0)Z = \int_{0}^{\infty}P(Z > z)dz \leq 
u + \int_{u}^{\infty}P(Z > z)dz$ $(u > 0)$.
If we assume $u > (\frac{c_{2}(M,\delta)}{n\pi})$, we obtain the following upper bound:
\begin{flalign*}
&ER_{n,\hat{k}} = ER_{n, \hat{k}}(\splitj) \leq u + \int_{u}^{\infty}4\Big(\frac{4\sqrt{d}C^{2}4(1+ 2\delta)\text{diam}(\Xi_{n})}{s}\Big)^{d}\exp(-\frac{n\pi}{c_{2}(M,\delta)}s)ds&\\
&\leq u + \Big(\frac{n\pi}{c_{2}(M, \delta)}\Big)^{d}4(4\sqrt{d}C^{2}4(1+ 2\delta)\text{diam}(\Xi_{n}))^{d}\int_{u}^{\infty}\exp(-\frac{n\pi}{c_{2}(M, \delta)}s)ds\\
&=u + \Big(\frac{n\pi}{c_{2}(M, \delta)}\Big)^{d}4(4\sqrt{d}C^{2}4(1+ 2\delta)\text{diam}(\Xi_{n}))^{d}\frac{c_{2}(M, \delta)}{n\pi}\exp(-\frac{n\pi}{c_{2}(M, \delta)}u)
\end{flalign*}
If we let 
\begin{flalign*}
u=\frac{c_{2}(M, \delta)}{n\pi}\log\Big\{ \Big(\frac{n\pi}{c_{2}(M, \delta)}\Big)^{d}4(4\sqrt{d}C^{2}4(1+ 2\delta)\text{diam}(\Xi_{n}))^{d} \Big\},
\end{flalign*}
we have 
\begin{flalign*}
&ER_{n,\hat{k}} \leq \frac{c_{2}(M, \delta)}{n\pi}c_{1}(n\pi, d, \Xi_{n}, M, \delta) + \frac{c_{2}(M, \delta)}{n\pi}&\\
&\hspace{1.15cm}\leq 2\frac{c_{2}(M, \delta)}{n\pi}c_{1}(n\pi, d, \Xi_{n}, M, \delta)
\end{flalign*}
(by $A4$, $u$ is indeed larger than $(c_{2}(M, \delta)/n\pi$)).  The last inequality also follows by $A4$ as $c_{1}(n\pi, d, \Xi_{n}, M, \delta)$ is assumed to be larger than 1.

The same bound also holds for $ET_{n,\tilde{k}}$ as $T_{n, \tilde{k}}$ has the same form as $R_{n, \hat{k}}$. This is seen by noting that 
\begin{flalign*}
&T_{n, \tilde{k}}=(1 + \delta)E_{S_{n}}\int L(y, \psi_{\tilde{k}}(x|P^{0}_{n,S_{n}})) - L(y, \psi(x))dP^{1}_{n, S_{n}}(x, y)\\
&\hspace{1.25cm}- (1 + 2\delta)E_{S_{n}}\int L(y, \psi_{\tilde{k}}(x|P^{0}_{n,S_{n}})) - L(y, \psi(x))dP_{0}(x, y)\\
&\hspace{.75cm}= (1 + \delta)E_{S_{n}}\int L(y, \psi_{\tilde{k}}(x|P^{0}_{n,S_{n}})) - L(y, \psi(x))dP^{1}_{n, S_{n}}(x, y)\\
&\hspace{1.25cm}-(1 + \delta)E_{S_{n}}\int L(y, \psi_{\tilde{k}}(x|P^{0}_{n,S_{n}})) - L(y, \psi(x))dP_{0}(x, y)\\
&\hspace{1.25cm}-\delta E_{S_{n}}\int L(y, \psi_{\tilde{k}}(x|P^{0}_{n,S_{n}})) - L(y, \psi(x))dP_{0}(x, y)
\end{flalign*}
Noting that $Z_{\tilde{k}}^{\splitj}(x, y) = L(y, \psi_{\tilde{k}}(x|P^{0}_{n,\splitj})) - L(y, \psi(x))$, and comparing the above expression
with that in (\ref{eq::rnkdef}), we see that the upper bound obtained for $ER_{n, \hat{k}}$ also holds for $ET_{n,\tilde{k}}$.

Thus, we have 
\begin{flalign*}
&0 \leq E\tilde{\theta}^{CV}_{n(1-\pi)}(\hat{k}) - \theta_{opt} \leq (1+2\delta)\{E\tilde{\theta}^{CV}_{n(1-p)}(\tilde{k}) - \theta_{opt}\}  + &\\
&\-\hspace{4.5cm} \frac{4c_{2}(M,\delta)}{n\pi}c_{1}(n\pi, d, \Xi_{n}, M, \delta) +&\\
&\-\hspace{4.5cm} (1 + \delta)\gamma                                       
\end{flalign*}

\end{proof}

\subsection{Proof of Theorem \ref{thm::kern_main}}
\begin{proof}
The $K$-fold cross-validation criterion is continuous as a function of $H$ (the choice of Gaussian kernel is important for this
purpose).  Note that $\Xi_{n}$ is closed and bounded, therefore $\Xi_{n}$ is compact. 
As $\Xi_{n}$ is compact and the cross-validation criterion is continuous, there exists a minimizer of the $K$-fold
cross-validation criterion in $\Xi_{n}$.  The cross-validation benchmark for $K$-fold cross-validation is proportional to the sum
of conditional risks for $K$ kernel regression estimators each based off of
a training set of size $n(1-\pi)$.  If we can show that one of these conditional risks is continuous as a function of $k$, it will follow
that the cross-validation benchmark is continuous.  Fixing, say, the $j$th split, $ \int L(y, \psi_{H}(x|P^{0}_{n,S_{n}^{(j)}})dP_{0}(x,y)$,
is continuous as a consequence of the dominated convergence theorem (see Theorem 6.27 of \cite{klenke2013probability}),
therefore the cross-validation benchmark is continuous.  Again, because $\tilde{\theta}^{CV}_{n(1-\pi)}(H)$ is continuous 
and $\Xi_{n}$ is compact, there also exists a minimizer of $\tilde{\theta}^{CV}_{n(1-\pi)}(H)$  in $\Xi_{n}$.
Thus, part ($a$) of Theorem \ref{thm::kern_main} holds.

$A1$ holds because the kernel regression estimator is bounded by $M$.
$A2$ is satisfied because $\Xi_{n}$ is bounded.  $A4$ holds by
assumption.  This leaves us to show that $A3$ holds.

Let $k_{i,H}=K(H^{1/2}(X_i-x))$.
Consider the partial derivative of $L(y, \psi_{H}(x|P_{n})) - L(y, \psi(x))$, where $x=(x_{1},\ldots,x_{p})'$ is in the
support of $X$.
We have for $u \neq v$,
\begin{flalign*}
&\bigg| \pd[L(y, \psi_{H}(x|P_{n})) - L(y, \psi(x))] \bigg|&\\ 
&= \bigg| 4(\psi_{H}(x|P_{n}) - y)\frac{\sum_{i=1}^{n} (\psi_{H}(x|P_{n}) - Y_{i})k_{i,H}(x)(X_{iu} - x_{u})(X_{iv} - x_{v})}{\sum_{i=1}^{n} k_{i,H}(x)} \bigg|\\
&\leq 64M\sum_{i=1}^{n}\frac{k_{i,H}(x)MB^{2}}{\sum_{i=1}^{n} k_{i,H}(x)}=64M^{2}B^{2},
\end{flalign*}
where, for the inequality, we have used the fact that $|(\psi_{H}(x|P_{n}) - y)| \leq 2M$,  $|(X_{iu} - x_{u})| \leq 2B$,
and $|(X_{iv} - x_{v})| \leq 2B$.

When $u = v$ we have the same expression as above with the 4 replaced by 2.  Thus, the above derivative is
bounded above by a constant and the constant is independent of the training set.  As a function of $H$,
$L(y, \psi_{H}(x|P_{n})) - L(y, \psi(x))$ is continuously differentiable and has derivative bounded by 
$64M^{2}B^{2}$, therefore by \cite{duistermaat2004multidimensional} (Exercise 2.15) it is Lipshitz continuous with Lipshitz constant
$C=64\sqrt{p(p + 1)/2}B^{2}M^{2}$.  Thus, condition $A3$ is satisfied and part ($b$) is proven.

Finally, we prove part ($c$).
Let $\tilde{H}_{n}$ be a bandwidth matrix that minimizes the cross-validation benchmark $\tilde{\theta}_{n(1 - \pi)}^{CV}(H)$.
By definition, for any other fixed bandwidth matrix, $\breve{H}_{n} \in \Xi_{n}$, the below inequality holds:
\begin{equation*}
E\tilde{\theta}_{n(1-\pi)}^{CV}(\tilde{H}_{n}) - \theta_{opt} \leq E\tilde{\theta}_{n(1-\pi)}^{CV}(\breve{H}_{n}) - \theta_{opt}.
\end{equation*}
In addition, because the cross-validation technique being utilized is $K$-fold cross-validation and by the properties of
conditional risk discussed in the introduction, it is the case that for $\breve{H}_{n}$ we have, 
\begin{equation*}
E\tilde{\theta}_{n(1-\pi)}^{CV}(\breve{H}_{n}) -\theta_{opt} = E_{O_{1},\ldots,O_{n(1-\pi)},X}[(\psi_{n(1-\pi), \breve{H}_{n}}(X) - \psi(X))^{2}]
\end{equation*}
where the expectation is taken over a training set of size $n(1-\pi)$, $O_{1},\ldots,O_{n(1-\pi)} \sim P_{0, X}$, and a newly observed
covariate, $X \sim P_{0, X}$.  
The conditions of Theorem \ref{thm::kern_main} will be met for sufficiently small $\delta$ and for sufficiently large $n$.
In this case, we have the following upper bound for $E\tilde{\theta}_{n(1-\pi)}^{CV}(\hat{H}_{n}) - \theta_{opt}$:
\begin{flalign}
 \label{eq::breveHinq}
 &E\tilde{\theta}^{CV}_{n(1-\pi)}(\hat{H}_{n}) - \theta_{opt} \leq (1+2\delta)(E\tilde{\theta}^{CV}_{n(1-\pi)}(\breve{H}_{n}) - \theta_{opt})+&\\  
&\nonumber \-\hspace{4.5cm}\frac{4c_{2}(M,\delta)}{n\pi}c_{1}(n\pi, d, \Xi_{n}, M, \delta).
\end{flalign}

Given our choice of $\lambda_{n}$, we will show there exists a sequence of bandwidth matrices $\breve{H}_{n} \in \Xi_{n}$
that yields the desired result.  Furthermore, $\lambda_{n}$ must go
to $\infty$ at a rate slow enough such that the second term on the right-hand side of the inequality in (\ref{eq::breveHinq}) is of smaller order than the first term.  

Define 
\begin{equation*}
h_{n}(q)=V(\log(n)n)^{1/(q + 2)},
\end{equation*}
where $V > 0$ is positive constant.
Note that 
\begin{equation*}
\lambda_{n}=\sqrt{p}V(\log(n)n)^{1/3} = \sqrt{p}\max_{q \in \{{1, \ldots, p}\}}h_{n}(q) 
\end{equation*}  
Choose $\breve{H}_{n} = T'\Lambda T$, where $\Lambda$ is a diagonal matrix with its $m$ diagonal entries equal to $h_{n}(m)$.
Let $||\breve{H}_{n}||_{O}$ be the operator norm or the largest eigenvalue of the matrix $\breve{H}_{n}$.
As the largest eigenvalue of $\breve{H}_{n}$ is $h_{n}(m)$, $||\breve{H}_{n}||_{O} =h_{n}(m)$.
We have $||\breve{H}_{n}||_{F} \leq \sqrt{p}||\breve{H}_{n}||_{O} = \sqrt{p}h_{n}(m) \leq \lambda_{n}$.  
Therefore, $\breve{H}_{n} \in \Xi_{n}$.   
 
 The kernel regression estimator $\psi_{n(1-\pi), \breve{H}_{n}}(X)$ is equivalent to 
 the kernel regression estimator obtained by regressing the $m$-dimensional covariate vector $TX_{i}$ on $Y_{i}$, using the bandwidth matrix $h_{n}(m)I_{m}$.  
 Denote this equivalent estimator as $\phi_{n(1-\pi), h_{n}(m)}(TX)$.
 We have 
 \begin{flalign*}
 &E\tilde{\theta}_{n(1-\pi)}^{CV}(\breve{H}) -\theta_{opt} = E_{O_{1},\ldots,O_{n(1-\pi)},X}[(\psi_{n(1-\pi), \breve{H}_{n}}(X) - \psi(X))^{2}]&\\
 &\hspace{3.35cm}= E_{O_{1},\ldots,O_{n(1-\pi)},X}[(\phi_{n(1-\pi), h_{n}(m)}(TX) - \phi(TX))^{2}].
 \end{flalign*}
 Thus, by using Inequality (\ref{inq::kernel_bound1}) from
 Lemma \ref{lemma::kern_lemma}:
 \begin{equation}
 E\tilde{\theta}^{CV}_{n(1-\pi)}(\breve{H}) - \theta_{opt} \leq A'\log(n(1 - \pi))^{\frac{m}{m + 2}}(n(1 - \pi))^{\frac{-2}{m + 2}}, \label{inq::kern_upperbnd}
 \end{equation}
 where $A'=(A_{1}V^{-1} + A_{2}V^{\frac{m}{m + 2}})$.  Note that in the application of Lemma \ref{lemma::kern_lemma}, there exists $\sigma^{2}$ such that
 $Var(Y|X=x) \leq \sigma^{2}$ because $Y$ is a.s. bounded.
 
 Next consider the rate at which $diam(\Xi_{n})$ grows.  If $H \in \Xi_{n}$ then $||H|| \leq ||H||_{F} \leq \lambda_{n}$ ($||H||$ is
 simply the Euclidean norm of the unique elements in $H$).    
 Thus, $diam(\Xi_{n})$ grows at rate $O(\lambda_{n})$.  Therefore, the second term on the right-hand side of the inequality (\ref{eq::breveHinq})
 is indeed of smaller order than the first.   
 
 By the properties of the conditional risk discussed in the introduction,
 \begin{equation}
 E\tilde{\theta}_{n(1-\pi)}^{CV}(\hat{H}_{n}) - \theta_{opt} =  E\int (\psi_{n(1-\pi), \hat{H}_{n}}(x|P^{0}_{n, S_{n}^{(j)}}) - \psi(x))^{2}dP_{0,X}(x).
 \end{equation} 
 The desired result then follows by (\ref{eq::breveHinq}). 

\end{proof}

\begin{supplement}[id=suppA]
  \stitle{Supplementary materials for An Oracle Property of the Nadaraya-Watson Kernel Estimator for High Dimensional Nonparametric Regression}
  \slink[doi]{COMPLETED BY THE TYPESETTER}
  \sdatatype{.pdf}
  \sdescription{We present the proof of Lemma 6.1 in this supplementary article as well as the derivative of $K$-fold cross-validation criterion
  with respect to $H$.}
\end{supplement}

\newpage
\bibliographystyle{imsart-nameyear}
\bibliography{MetricLearning2,CVrefs2}

 \end{document}